\newcommand{\RNum}[1]{\uppercase\expandafter{\romannumeral #1\relax}}
\newif\ifger
\newtheorem{theorem}{Theorem}[section]
\newtheorem{lemma}[theorem]{Lemma}
\newtheorem{corollary}[theorem]{Corollary}
\newtheorem{remark}[theorem]{Remark}
\newtheorem{proposition}[theorem]{Proposition}
\title{Analysing 2-$(v,k,2)$ designs admitting a flag-transitive almost simple automorphism group with socle $PSL(2,q)$ by means of conics and hyperovals of $PG(2,q)$
}
\date{}
\author{Alessandro Montinaro$^1$,Yanwei  Zhao$^2$\footnote{Corresponding author.}, Zhilin Zhang$^3$, Shenglin Zhou$^4$\footnote{This work is
supported by the National Natural Science Foundation of China (Grant No.12271173). Email: alessandro.montinaro@unisalento.it(A. Montinaro), ywzhao@sdau.edu.cn(Y. Zhao), 20241032@gdufe.edu.cn(Z. Zhang), slzhou@scut.edu.cn(S. Zhou)}\\
\small  \it ${}^1$  Dipartimento di Matematica e Fisica “E. De Giorgi”, University of Salento,\\
\small  \it Lecce 73100, Italy\\
\small  \it ${}^2$School of Information Science and Engineering, Shandong Agricultural University,\\
 \small \it Shandong 271018, China\\
\small  \it ${}^3$School of Statistics and Mathematics, Guangdong University of Finance and Economics,\\
\small \it Guangzhou 510320, China\\
\small  \it ${}^4$School of Mathematics, South China University of Technology,\\
\small \it  Guangzhou 510641,  China
}
\begin{document}
\baselineskip=19pt
\maketitle
\date

\begin{abstract}
The classification of the $2$-designs with $\lambda=2$ admitting a flag-transitive automorphism groups with socle $PSL(2,q)$ is completed by settling the two open cases in \cite{ABDT}. The result is achieved by using conics and hyperovals of $PG(2,q)$.

\medskip
\noindent{\bf Mathematics Subject Classification (2020):} 05B05, 05B25, 20B25, 51E15, 51E21

\medskip
\noindent{\bf Keywords:} $2$-design, automorphism group, flag-transitive, socle, projective plane, conic, hyperoval.
\end{abstract}

\section{Introduction}

A $2$-$(v,k,\lambda )$ \emph{design} $\mathcal{D}$ is a pair $(\mathcal{P},%
\mathcal{B})$ with a set $\mathcal{P}$ of $v$ points and a set $\mathcal{B}$
of $b$ blocks such that each block is a $k$-subset of $\mathcal{P}$ and each two
distinct points are contained in $\lambda $ blocks. We say $\mathcal{D}$ is 
\emph{non-trivial} if $2<k<v-1$, and symmetric if $v=b$. All $2$-$(v,k,\lambda
)$ designs in this paper are assumed to be non-trivial. An automorphism of $%
\mathcal{D}$ is a permutation of the point set which preserves the block
set. The set of all automorphisms of $\mathcal{D}$ with the composition of
permutations forms a group, denoted by $\mathrm{Aut(\mathcal{D})}$. For a
subgroup $G$ of $\mathrm{Aut(\mathcal{D})}$, $G$ is said to be \emph{%
point-primitive} if $G$ acts primitively on $\mathcal{P}$, and said to be 
\emph{point-imprimitive} otherwise. In this setting, we also say that $%
\mathcal{D}$ is either \emph{point-primitive} or \emph{point-imprimitive}, respectively. A \emph{flag} of $\mathcal{D}$ is a pair $(x,B)$ where $x$ is
a point and $B$ is a block containing $x$. If $G\leq \mathrm{Aut(\mathcal{D})%
}$ acts transitively on the set of flags of $\mathcal{D}$, then we say that $%
G$ is \emph{flag-transitive} and that $\mathcal{D}$ is a \emph{%
flag-transitive design}.

The $2$-$(v,k,\lambda )$ designs $\mathcal{D}$ admitting a flag-transitive
automorphism group $G$ have been widely studied by several authors. If $\lambda=1$, that is when $\mathcal{D}$ is a linear space, then $G$ acts point-primitively on $\mathcal{D}$ by an important results due to Higman and McLaughlin \cite{HM} dating back to 1961. In 1990, Buekenhout, Delandtsheer, Doyen, Kleidman, Liebeck and Saxl \cite{BDDKLS} obtained a classification of $2$-designs with $\lambda =1$ except when $v$ is a power of a prime and $G \leq A \Gamma L_{1}(v)$. If $\lambda>1$, it is no longer true that $G$ acts point-primitively on $\mathcal{D}$ as shown by Davies in \cite{Da}. In this contest, a special attention is given to the case $\lambda=2$. In a series a paper, Regueiro \cite{ORR,ORR1,ORR2,ORR3} proved that, if $\mathcal{D}$ is symmetric, then either $(v,k)=(7,4),(11,5),(16,6)$, or $v$ is a power of an odd prime and $G \leq A \Gamma L_{1}(v)$. In 2016, Liang and the fourth author \cite{LT} proved that, if $\mathcal{D}$ is non-symmetric and $G$ is point-primitive, then $G$ is an affine or an almost simple group. In each of these cases, $G$ has a unique minimal normal subgroup $T$, its socle $Soc(G)$, which is an elementary abelian group or a non-abelian simple group, respectively. Further, in the same paper Liang and Zhou and in \cite{LT1}, under the assumption of the point-primitivity of $G$ on $\mathcal{D}$, they classify $\mathcal{D}$ when $T$ is either sporadic or an altenating group, respectively. In 2020, Devillers, Liang, Praeger and Xia \cite{DLPX} showed that, if $\mathcal{D}$ is non-symmetric, then $G$ is point-primitively on $\mathcal{D}$, and hence $G$ is affine or almost simple. Moreover, they classified $\mathcal{D}$ when $T \cong PSL_{n}(q)$ for $n \geq 3$. Very recently, Liang and the first author \cite{LM} have completed the classification of the flag-transitive $2$-$(v,k,2)$ when $T$ is an elementary abelian except when $v$ is a power of a prime and $G \leq A \Gamma L_{1}(v)$, whereas Alavi et al. \cite{Alavi, ABDT} have classified $\mathcal{D}$ when $T$ is almost simple except when $T \cong PSL(2,q)$ and $\mathcal{D}$ is as follows:  
\begin{enumerate}
    \item[(I)] $\cal D$ has parameters $(v,b,r,k)=(\frac{q(q-1)}{2}, 2(q^2-1),2(q+1),\frac{q}{2})$, $q=2^{f} >8$, $f \geq 1$, $T_{x} \cong D_{2(q+1)}$ with $x$ a point of $\mathcal{D}$, and $T_B\cong Z_{2}^{f-1}$ or $Z_{2}^{f}$ with $B$ a block of $\mathcal{D}$;
    \item[(II)] $\cal D$ has parameters $(v,b,r,k)=(\frac{q(q-1)}{2}, \frac{q(q+1)}{2}, q-1,q+1)$, $q=p^{f}>5$, $p$ odd, $f \geq 1$, $T_{x} \cong D_{q+1}$ and $T_B\cong D_{q-1}$.
\end{enumerate}

A reason why the two cases are left open is that the typical group theoretical tools used to handle this type of problems become ineffective when $D$ is `close' to a classical example of $2$-design with a different $\lambda$. For instance, in  (I), the point set of $\mathcal{D}$, the block size and the group actions are those of the Witt Bose-Shrikhande linear space. As pointed out in \cite{BDD}, a model for Witt Bose-Shrikhande linear space $W(q)$, $q \geq 8$, $q$ even, can obtained by using the complementary set and the external lines of a hyperoval of $PG(2,q)$. All the previous argument motivated us to tackle the problem from a completely different perspective, namely a more geometric one involving the Desarguesian projective plane $PG(2,q)$. More precisely, we show that the open cases cannot occur by using the action of $PSL(2,q)$ in $PG(2,q)$ and the fact it preserves remarkable geometric structures such as a conic and, when $q$ is even, a hyperoval.

There is a small gap in \cite[Proposition 3.1]{ABDT} that leads to case (I), and for this reason the admissible case $T_B\cong Z_{2}^{f}$ is missed, hence we inserted it in our paper as an additional case to be analysed. Our result is the following:

\begin{theorem}\label{TH1}
Let  $\mathcal{D}$  be a  nontrivial $2$-$(v,k,2)$ design admitting a flag-transitive group $G$ of automorphisms with $Soc(G)=PSL(2,q)$, $q =p^f \geq4$, then $({\cal D}, G)$ is as in one of the lines in Table \ref{t1}.
 
\begin{table}[htbp]
\begin{center}
\caption{$2$-designs with $\lambda=2$ admitting a flag-transitive automorphims group with socle $PSL(2,q)$.}\label{t1}
\medskip
\begin{tabular}{ccccccccccc}
\hline
Line    & $v$& $b$ & $r$ & $k$ & $G$    &$G_x$ & $G_B$  & $Aut(\cal D)$  &References\\
\hline
1~&6&10&5&3&$PSL(2,5)$  &$D_{10}$&$S_3$ &$PSL(2,5)$&\cite[Theorem 1.1]{LT1} \\
2~&7&7&4&4&$PSL(2,7)$&$S_4$  &$S_4$&$PSL(2,7)$&\cite[Theorem 1]{ORR}  \\
3~&10&15&6&4&$PGL(2,5)$&$D_{12}$  &$D_8$&$S_6$&\cite[Theorem 1.1]{LT1} \\
4~&&&&&$PSL(2,9)$&$3^2:4$  &$S_4$& &\cite[Theorem 1.1]{LT1}\\
5~&11&11&5&5&$PSL(2,11)$&$A_5$  &$A_5$&$PSL(2,11)$&\cite[Theorem 1]{ORR} \\
6~&28&252&27&{7}&$PSL(2,8)$ & $D_{18}$  &$D_{14}$&$P\Gamma L(2,8)$&\cite[Proposition 3.1]{ABDT} \\
7~&&&&&$P\Gamma L(2,8)$ & $D_{18}:3$  &$7:6$&&\cite[Proposition 3.1]{ABDT}\\
8~&36&84&14&6&$PSL(2,8)$&$D_{14}$  &$S_3$&$P\Gamma L(2,8)$ &\cite[Theorem 3.10]{MF} \\
9~&&&&&$P\Gamma L(2,8)$&$7:6$  & $3 \times S_{3}$& & \cite[Theorem 3.10]{MF}\\
\hline
\end{tabular}
\end{center}
\end{table}
\end{theorem}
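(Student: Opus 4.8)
Since $T=PSL(2,q)$ acts 2-transitively (indeed sharply 3-transitively for $p=2$, and with known stabilizer structure for $p$ odd) on the $q+1$ points of the projective line, and this action is the action of $PSL(2,q)$ on a conic $\mathcal{C}$ in $PG(2,q)$ (when $p$ is odd) or on a hyperoval (when $p$ is even, adding the nucleus), I would first identify the point set $\mathcal{P}$ of $\mathcal{D}$ with a concrete $T$-orbit inside $PG(2,q)$. In case (II), $T_x\cong D_{q+1}$ is the stabilizer of an external point to the conic $\mathcal{C}$ (a point on no tangent, equivalently on $(q+1)/2$ secants), so $\mathcal{P}$ is the set of external points; in case (I), $T_x\cong D_{2(q+1)}$ with $q$ even is the stabilizer of a point off the hyperoval $\mathcal{O}$, i.e. a point lying on $q/2$ secants of $\mathcal{O}$, so $\mathcal{P}$ is the complement of $\mathcal{O}$ — exactly the point set of the Witt–Bose–Shrikhande space. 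The key geometric facts I would invoke are the classical incidence counts: each external point/non-hyperoval point lies on a prescribed number of secants, tangents, and external lines, and two such points determine a unique line, which is of one of these types.

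**The core of the argument is to pin down the blocks.** A block $B$ has stabilizer $T_B$ with $|T_B|$ forcing $|B^T|=b$, and $B$ is a $k$-subset of $\mathcal{P}$ invariant under $T_B$; I would enumerate the $T_B$-invariant $k$-subsets of $\mathcal{P}$ and show that the resulting family of blocks cannot form a $2$-$(v,k,2)$ design. For case (II), $T_B\cong D_{q-1}$ is (conjugate to) the stabilizer of a secant line $\ell$ of $\mathcal{C}$, equivalently of the pair of points $\mathcal{C}\cap\ell$; the natural $T_B$-invariant subsets of the external points are: the $(q-1)/2$ external points on $\ell$, or the external points on the polar configuration of $\{P\}$ for $P=\ell\cap\mathcal{C}^{\perp}$, and one checks $k=q+1$ forces a specific such configuration — then I would compute $\lambda$ by a direct double count (number of blocks through two external points = number of $T$-translates of $B$ containing a fixed pair) and show it is not $2$. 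For case (I), $T_B\cong Z_2^{f-1}$ or $Z_2^f$ is an elementary abelian group; such a subgroup of $PSL(2,2^f)$ is (contained in) a Sylow $2$-subgroup, i.e. the stabilizer of a point of the hyperoval, so $T_B$ fixes one or two points of $\mathcal{O}$ and acts on $\mathcal{P}=PG(2,q)\setminus\mathcal{O}$; I would describe the orbits of $T_B$ on $\mathcal{P}$ (these relate to tangent/secant lines through the fixed hyperoval point(s)) and show no union of orbits of total size $k=q/2$ yields $\lambda=2$.

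**I would organize this as a short case analysis** — Section on case (II) via conics, Section on case (I) via hyperovals — each proceeding: (i) realize $\mathcal{P}$ as an explicit $T$-orbit in $PG(2,q)$ with the incidence data; (ii) realize $T_B$ as a subgroup stabilizing a line or a point of $\mathcal{C}$/$\mathcal{O}$; (iii) list $T_B$-invariant $k$-subsets; (iv) for each candidate block family, compute the replication/pair-covering numbers and derive a contradiction with $\lambda=2$, possibly also using that $G$ (not just $T$) must act flag-transitively, which further constrains $T_B$ up to $N_G(T_B)$-conjugacy and the block orbits under $G/T$. A subtlety I would keep in mind is the corrected case $T_B\cong Z_2^f$ flagged in the introduction: $Z_2^f$ is a full Sylow $2$-subgroup, the stabilizer of a single hyperoval point, so the candidate blocks are unions of its orbits on the $q^2-1$ points off $\mathcal{O}$, and I must rule these out as well.

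**The main obstacle** will be step (iii)–(iv) in case (II): classifying all $D_{q-1}$-invariant $(q+1)$-subsets of the external points of a conic is more delicate than the $2$-group case, because $D_{q-1}$ has several orbit types on $\mathcal{P}$ (orbits of size $q-1$, $(q-1)/2$, and small fixed configurations coming from the two fixed points of the cyclic part and the diagonal involutions), so there are several ways to assemble a set of size $q+1$, and each must be checked; moreover one must ensure the count is done for the actual block, not merely an invariant set, which may require using that the $b$ blocks fall into a single $G$-orbit and re-deriving $\lambda$ from $bk(k-1)=v(v-1)\lambda$ together with the explicit intersection pattern. The even case (I) should be comparatively clean once the hyperoval picture is set up, since $2$-subgroups have very transparent orbit structure on $PG(2,2^f)$, and the Witt–Bose–Shrikhande space is known not to be a $2$-design with $\lambda=2$; making that last sentence rigorous — i.e. showing the specific $G$-invariant block family of size $b=2(q^2-1)$ fails the $\lambda=2$ condition — is the crux there.
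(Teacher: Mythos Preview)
Your overall strategy---realize $\mathcal{P}$ geometrically inside $PG(2,q)$ via conics/hyperovals, pin down $T_B$ geometrically, enumerate its invariant $k$-subsets, and contradict $\lambda=2$ by a covering count---is exactly the paper's strategy. However, two points in your execution do not match and would derail the argument.

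\textbf{Case (I).} Your identification of $\mathcal{P}$ is wrong: the complement of the hyperoval $\mathcal{J}$ in $PG(2,q)$ has $q^{2}-1$ points, not $v=q(q-1)/2$. The correct model (and the paper's) takes $\mathcal{P}$ to be the set $\mathcal{E}$ of \emph{external lines} to $\mathcal{J}$, which has the right size; $T_{x}\cong D_{2(q+1)}$ is then the stabilizer of such a line. With this fixed, the elementary abelian subgroup $T_{B}\le S$ (a Sylow $2$-subgroup) fixes a tangent line $t$ pointwise, and the block $B=\ell^{T_B}$ is forced to coincide with the pencil $E_i$ of the $q/2$ external lines through a fixed point $P_{\sigma_i}\in t$; this already gives $|B|=q/4$ when $|T_B|=q/2$, and in the $|T_B|=q$ case the two $T$-block-orbits collapse under an element of the normalizer $K$, contradicting $|G:G_B|=2|T:T_B|$. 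Your proposed orbit analysis on \emph{points} off $\mathcal{J}$ would not see this.

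\textbf{Case (II).} You have internal/external swapped (a point on no tangent is \emph{internal}), so $\mathcal{P}=I$, the internal points. More substantively, the $T_B\cong D_{q-1}$-invariant candidates are not the linear configurations you list: $T_B$ preserves a pencil of bitangent conics $\mathcal{C}_h$, and the relevant $T_B$-orbits on $I$ are the $(q-1)$-arcs $\mathcal{C}_h^{\ast}=\mathcal{C}_h\setminus\{O,P_\infty\}$ (or halves thereof indexed by quadratic character), with $h$ constrained by $h-1\in Q_{\pm1}$ depending on $q\bmod 4$. The contradiction with $\lambda=2$ is then obtained not by a global double count but by exhibiting, via an explicit system of coset representatives $F$ of $T_B$ in $T$, a pair of internal points covered by at least four distinct blocks $B^{\psi}$; this requires careful casework on $q\equiv\pm1\pmod 4$ and, in the $q\equiv1$ case, on whether $G_B$ contains a field automorphism fusing two half-orbits $\mathcal{O}_{h,i}$, $\mathcal{O}_{h^{p^m},j}$. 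Your sketch underestimates this step: the ``natural $T_B$-invariant subsets'' you name (points on the secant $\ell$, polar configurations) have the wrong size and are not where the blocks live.
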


Some references in Table \ref{t1} are rectified with respect to those provided in \cite[Table 1]{ABDT}. For instance, the $2$-$(36,6,2)$ design as in Lines 8--9 was actually constructed in \cite[Example 3.9(1) and Theorem 3.10]{MF} and the proof is computer free.

\section{Proof of Theorem \ref{main}}
Let  $\mathcal{D}$  be a  nontrivial $2$-$(v,k,2)$ design admitting a flag-transitive automorphism group $G$ with $Soc(G)=PSL(2,q)$, $q =p^f \geq4$. Denote $Soc(G)$ by $T$, then $T \unlhd G \leq Aut(T)$, where $Aut(T) \cong P \Gamma L(2,q)$. Moreover, we denote by $x$ and $B$ any point and block of $\mathcal{D}$, respectively.

Our starting point is the following proposition, which is essentially proven in \cite[Proposition 3.1]{ABDT}. 

\begin{proposition}\label{D2(q+1)even}
Let  $\mathcal{D}$  be a  nontrivial $2$-$(v,k,2)$ design admitting a flag-transitive group $G$ of automorphisms with $Soc(G)=PSL(2,q)$, $q =p^f \geq4$, then $({\cal D}, G)$ is as in one of the lines in Table \ref{t1},or one of the following holds:
\begin{enumerate}

\item[(I)]$\cal D$ has parameters $(v,b,r,k)=(\frac{q(q-1)}{2}, 2(q^2-1),2(q+1),\frac{q}{2})$, $q=2^{f} >8$, $f \geq 1$, $T_{x} \cong D_{2(q+1)}$ and $T_B\cong Z_{2}^{f-1}$ or $Z_{2}^{f}$;
\item[(II)]$\cal D$ has parameters $(v,b,r,k)=(\frac{q(q-1)}{2}, \frac{q(q+1)}{2}, q-1,q+1)$, $q=p^{f}>5$, $p$ odd, $f \geq 1$, $T_{x} \cong D_{q+1}$ and $T_B\cong D_{q-1}$.
\end{enumerate}

\end{proposition}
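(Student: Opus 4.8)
The plan is to reconstruct the argument of \cite[Proposition 3.1]{ABDT}, repairing the determination of $T_B$ in case (I). First I would record the constraints forced by flag-transitivity. As $G$ is flag-transitive it is point- and block-transitive, so $v=[G:G_x]$ and $b=[G:G_B]$; moreover $G_x$ is transitive on the $r$ blocks through $x$ and $G_B$ is transitive on the $k$ points of $B$, so $r\mid|G_x|$, $k\mid|G_B|$, and the flag stabiliser $G_x\cap G_B$ has order $|G|/(bk)$. Fisher's inequality $b\geq v$ yields $r\geq k$, which together with $2(v-1)=r(k-1)$ gives $2(v-1)=r(k-1)<r^{2}$, so $v<\frac{1}{2}r^{2}+1$. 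Substituting $v=|G|/|G_x|$ and $r\leq|G_x|$ gives $|G|<\frac{1}{2}|G_x|^{3}+|G_x|$; since $|T|\leq|G|\leq|P\Gamma L(2,q)|=fq(q^{2}-1)$ and $[G:T]$ divides $2f$, this both bounds $v$ above by a quantity of order $q^{2}$ and forces $|T_x|=|G_x\cap T|$ to be large, of order $q$ up to a factor depending on $f$.

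Next I would invoke Dickson's classification of the subgroups of $PSL(2,q)$ and their extensions inside $P\Gamma L(2,q)$. Since $T\unlhd G$ one has $T_x\unlhd G_x$ and hence $G_x\leq N_G(T_x)$, and the lower bound on $|T_x|$ restricts $T_x$, together with the maximal subgroup of $T$ containing it, to one of: (a) a subgroup of a Borel subgroup $[q]{:}Z_{(q-1)/\gcd(2,q-1)}$, whence $(q+1)\mid v$, with $v=q+1$ exactly when $T_x$ is the whole Borel; (b) a cyclic or dihedral group lying in a maximal dihedral subgroup of order $q-1$ or $q+1$ (orders understood up to the factor $\gcd(2,q-1)$); (c) a subgroup of a subfield subgroup $PSL(2,q_{0})$ or $PGL(2,q_{0})$, $q=q_{0}^{t}$; or (d) a subgroup of $A_{4}$, $S_{4}$ or $A_{5}$, possible only for finitely many small $q$. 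In each case $v$ is determined or confined to a short list, and I would then impose the numerical conditions $2(v-1)=r(k-1)$, $bk=vr$, $r\mid|G_x|$, $k\mid|G_B|$, $b\mid|G|$ and $v<\frac{1}{2}r^{2}+1$, throughout keeping account of the outer factor $[G:T]$ (field automorphisms, and $PGL$ versus $PSL$ for $q$ odd).

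Cases (a), (c) and (d) should either produce the designs in Lines~1--9 of Table~\ref{t1} or be eliminated: in (c) the index $[T:PSL(2,q_{0})]$, respectively $[T:PGL(2,q_{0})]$, is too large against the available divisor $r\mid|G_x|$ to meet $v<\frac{1}{2}r^{2}+1$, save for small $q_{0}$ checked directly; in (d) only a few $q$ survive and are treated individually. The decisive case is (b). With $q$ even and $T_x\cong D_{2(q+1)}$, or $q$ odd and $T_x\cong D_{q+1}$, one obtains $v=\frac{q(q-1)}{2}$, and the parameter equations leave exactly the open families, the boundary values $q=8$ (even) and $q=5$ (odd) instead giving the genuine designs of Lines~6--7 and Line~3. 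The block stabiliser is then read off from $b=[G:G_B]$, $k\mid|G_B|$ and the subgroup structure of $T$: in (II) it must be $T_B\cong D_{q-1}$, whereas in (I) both $T_B\cong Z_{2}^{f-1}$ and $T_B\cong Z_{2}^{f}$ remain admissible --- the latter being precisely the possibility overlooked in \cite[Proposition 3.1]{ABDT}, which is reinstated here. The companion configuration, $q$ even with $T_x\cong D_{2(q-1)}$ or $q$ odd with $T_x\cong D_{q-1}$ and hence $v=\frac{q(q+1)}{2}$, yields the $2$-$(36,6,2)$ design of Lines~8--9 for $q=8$ and must be ruled out arithmetically for the remaining $q$.

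I expect the main obstacle to be case (b): isolating exactly the two families (I) and (II) from the arithmetic, discarding every spurious numerical solution, and enumerating all admissible block stabilisers --- in particular spotting the $Z_{2}^{f}$ possibility missed in \cite{ABDT}. A pervasive secondary difficulty is the careful tracking of the outer factor $[G:T]$, which has to be propagated correctly through all of cases (a)--(d).
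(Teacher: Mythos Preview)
Your proposal is correct and follows essentially the same approach as the paper: both invoke \cite[Proposition 3.1]{ABDT} (you reconstruct its argument, the paper simply cites it) and then repair the one gap in case~(I) by reinstating the possibility $T_B\cong Z_2^{f}$. The paper is slightly more explicit about \emph{why} that case was missed---\cite{ABDT} tacitly assumed $T$ is block-transitive when computing $|T_B|=|T|/b=q/2$, whereas $G$ block-transitive only forces $T$ to have at most two block orbits, and two orbits of length $q^2-1$ give $|T_B|=q$---so you might add that one sentence, but otherwise your plan matches the paper's proof.
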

{\bf{Proof.}}
The proof is that \cite[Proposition 3.1]{ABDT} except for case (I) when it is derived $T_B\cong Z_{2}^{f-1}$ using the fact that $T$ acts blocks-transitively on $\mathcal{D}$ (see \cite[Proposition 3.1(8.1)]{ABDT}). However, their argument based on the analysis of the maximal subgroups of $G$ containing $T_{B}$ still works without the assumption of on the block-transitivity of $T$ on $\mathcal{D}$ but this leads to an extra case: the block set of $\mathcal{D}$, which has size $b=2(q^{2}-1)$, is partitioned into two $T$-orbits of equal length $q^{2}-1$, and hence $T_B\cong Z_{2}^{f}$.  $\hfill\square$

\bigskip
In the sequel, we refer to the $2$-designs recorded in (I) and (II) of Proposition \ref{D2(q+1)even} as $2$-designs of type I and II, respectively. 
\bigskip

\subsection{Exclusion of the $2$-designs of type I}

In this section, we prove the following result.

\begin{theorem}\label{soon}
There are no $2$-designs of type I.    
\end{theorem}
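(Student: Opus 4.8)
We must show that a design $\mathcal{D}$ of type I cannot exist: here $v = \tfrac{q(q-1)}{2}$, $b = 2(q^2-1)$, $r = 2(q+1)$, $k = \tfrac{q}{2}$, $q = 2^f > 8$, with $T = PSL(2,q)$, $T_x \cong D_{2(q+1)}$, and $T_B \cong Z_2^{f-1}$ or $Z_2^f$. The plan is to realise the point set of $\mathcal{D}$ inside $PG(2,q)$ and then extract a contradiction from the incidence structure of the blocks.

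\textbf{Step 1: Geometric model of the point set.} Since $q$ is even, $T = PSL(2,q) = SL(2,q)$ acts on the conic $\mathcal{C}$ of $PG(2,q)$ (the image of the natural action on the projective line under the Veronese/degree-2 embedding), and $\mathcal{C}$ together with its nucleus $N$ forms a hyperoval $\mathcal{O} = \mathcal{C} \cup \{N\}$ stabilised by $T$. The point stabiliser $T_x \cong D_{2(q+1)}$ is the stabiliser in $T$ of an \emph{external line} $\ell$ to $\mathcal{O}$ (external lines meet $\mathcal{C}$ in $0$ points and come in a single $T$-orbit of size $\tfrac{q(q-1)}{2} = v$), or equivalently the stabiliser of an unordered pair of conjugate points of $PG(2,q^2)\setminus PG(2,q)$ on $\mathcal{C}$; a dihedral group of order $2(q+1)$ matches the stabiliser of such a line. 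So I would identify $\mathcal{P}$ with the set $\mathcal{E}$ of external lines to $\mathcal{O}$, exactly the Witt--Bose--Shrikhande model pointed to in the introduction.

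\textbf{Step 2: Identify the blocks.} A block $B$ is a $\tfrac{q}{2}$-subset of $\mathcal{E}$ with $T_B \cong Z_2^{f-1}$ or $Z_2^f$ an elementary abelian $2$-group; such a subgroup is contained in a Sylow $2$-subgroup $U$ of $T$, which is the (unique) group of elations with a fixed centre-axis pair on $\mathcal{O}$ — concretely $U$ fixes one point $P_\infty \in \mathcal{C}$ and acts regularly on the remaining $q$ points of $\mathcal{C}$ and on the $q$ secant/tangent configuration through $P_\infty$. The key computation is to determine the orbits of $T_B$ (index $2$ or $1$ in $U$) on the set $\mathcal{E}$ of external lines, and to check whether an orbit, or union of orbits, of size exactly $\tfrac{q}{2}$ can serve as a block: one counts fixed external lines of $T_B$ and orbit lengths using that $|U| = q$ and $U$ acts on the $\tfrac{q(q-1)}{2}$ external lines. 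Since $r = 2(q+1) = |T_x|$, flag-transitivity forces $T_x$ to act transitively on the $2(q+1)$ blocks through $x$, i.e. on the external lines $\ell'$ forming a block with $\ell$; I would translate the $2$-design condition ($\lambda = 2$: any two external lines lie on exactly two common blocks) into a statement about the $U$-orbit structure and a divisibility/counting identity that fails.

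\textbf{Step 3: The contradiction.} The main obstacle — and the heart of the argument — is Step 2: pinning down precisely which $\tfrac{q}{2}$-subsets of external lines are $T_B$-invariant and showing the resulting family of blocks cannot have every pair of points in exactly two blocks. I expect the cleanest route is a double count: fix two external lines $\ell_1, \ell_2$ (a pair of points of $\mathcal{D}$) and count pairs $(B, g)$ with $g \in T$, $B$ a block, $\{\ell_1,\ell_2\}\subseteq B^g$; comparing the count obtained from $\lambda = 2$ with the count obtained from the orbit structure of $T_B$ on ordered pairs of external lines (governed by the $2$-transitivity-like behaviour of $T$ on $\mathcal{C}$ and the elementary abelian structure of $T_B$) should yield an equation with no solution for $q = 2^f > 8$. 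Handling the two possibilities $T_B \cong Z_2^{f-1}$ and $T_B \cong Z_2^f$ in parallel, and separately dealing with the fact that an elementary abelian group of order $q$ can stabilise a block only if that block is a union of its orbits on $\mathcal{E}$, will complete the exclusion.
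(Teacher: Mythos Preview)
Your Step~1 is correct and matches the paper exactly: the point set of $\mathcal{D}$ is identified with the set $\mathcal{E}$ of external lines to the regular hyperoval $\mathcal{J}=\mathcal{C}\cup\{N\}$ in $PG(2,q)$, and $T_B$ sits inside a Sylow $2$-subgroup $S$ of $T$. But Steps~2 and~3 are a plan, not a proof: you never determine the $T_B$-orbits on $\mathcal{E}$, and the proposed double count (``should yield an equation with no solution'') is never set up, let alone carried out. As written there is no contradiction on the page.

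The missing idea is short and purely geometric, and it bypasses any global double count. The Sylow $2$-subgroup $S$ fixes the tangent line $t=NQ$ pointwise (each nontrivial $\sigma_i\in S$ is a $(P_{\sigma_i},t)$-elation with $P_{\sigma_i}\in t\setminus\{N,Q\}$), and for each such centre $P_{\sigma_i}$ the pencil $E_i$ of external lines through $P_{\sigma_i}$ has size exactly $q/2$, with $S$ acting transitively on $E_i$ and kernel $\langle\sigma_i\rangle$. Now take any block $B$. In the case $|T_B|=q/2$ flag-transitivity of $T$ gives $B=\ell^{T_B}$ for a single external line $\ell$; since $\ell$ meets $t$ in some $P_{\sigma_i}$ and $T_B\leq S$ fixes $P_{\sigma_i}$, we get $B\subseteq E_i$, hence $B=E_i$ by comparing sizes. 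But then $\sigma_i\in T_B$ fixes $\ell$, so $|\ell^{T_B}|=|T_B|/2=q/4\neq k$, a contradiction. In the case $|T_B|=q$ (so $T_B=S$) the two $T$-orbits on blocks are represented by $B=E_i$ and $C=E_j$ with $i\neq j$; but the cyclic complement $K\cong Z_{q-1}$ in $N_T(S)$ permutes the centres $P_{\sigma_i}$ regularly, hence sends $E_i$ to $E_j$ inside $T$, collapsing the two orbits into one --- again a contradiction. This is what you need in place of the unspecified counting argument.
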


In order to prove Theorem \ref{soon}, we need to recall the following useful facts:

\begin{enumerate}
\item[(1)] An irreducible conic $\mathcal{C}$ of $PG(2,q)$, $q=2^{f}$, is a $%
(q+1)$\emph{-arc}, namely a set of $q+1$ points no three of them collinear, by 
\cite[Lemma 7.7]{Hir}. Any line of $PG(2,q)$ is either \emph{secant}, \emph{%
tangent} or \emph{external} according as it has $2$, $1$ or $0$ points in
common with $\mathcal{C}$, respectively. The tangent lines to $\mathcal{C}$
are all concurrent to a point $N$ called \emph{nucleus} of $\mathcal{C}$ by 
\cite[Corollary 7.11]{Hir}, and $\mathcal{J}=\mathcal{C}\cup \left\{
N\right\} $ is a $(q+2)$-arc called \emph{regular} \emph{hyperoval }(see \cite[%
Section 8.4]{Hir}).

\item[(2)] The lines of $PG(2,q)$ are either secants or external to $%
\mathcal{J}$, that is they have either $2$ or $0$ points in common with $%
\mathcal{J}$. The set $\mathcal{E}$ of the external lines to $\mathcal{J}$
has size $\frac{q(q-1)}{2}$ (see \cite[Section 8.1.]{Hir}). The number of
points of $PG(2,q)\setminus \mathcal{J}$ is $q^{2}-1$ and through each point
there are exactly $\frac{q}{2}+1$ secant lines to $\mathcal{J}$ and $\frac{q%
}{2}$ external lines to $\mathcal{J}$ by \cite[Corollary 8.8]{Hir}.

\item[(3)] $PGL(3,q)$ has a unique conjugacy class of subgroups isomorphic
to $PSL(2,q)$ by \cite[Table 8.3]{BHRD} (note that $PSL(2,q) \cong \Omega (3,q)$ by \cite[Corollary 7.14]{Hir} is reducible
and not maximal in $PGL(3,q)$ when $q$ is even. Further, the irreducible conics do not arise from
polarities in this case), and each of these groups is the stabilizer in $%
PGL(3,q)$ of a suitable regular hyperoval of $PG(2,q)$. The converse is also
true as a consequence of \cite[Theorem 7.4]{Hir}. In particular, each $%
PSL(2,q)$ fixes the nucleus of its invariant hyperoval and acts $2$%
-transitively on the remaining $q+1$ points of this one  by \cite[Corollary 7.15]{Hir}.

\item[(4)] $PGL(3,q)$ has a unique conjugacy class of elements of order $2$, and if $%
\sigma $ is any of these then $\sigma $ is a $(P_{\sigma },t_{\sigma })$%
-elation of $PG(2,q)$. That is, $\sigma $ fixes each of the $q+1$ points of $%
t_{\sigma }$ including $P_{\sigma }$ and fixes setwise each of the $q+1$ lines
of $PG(2,q)$ containing $P_{\sigma }$ by \cite[Exercise IV.4.6]{HP}. No
other points or lines of $PG(2,q)$ are fixed by $\sigma $. 

\item[(5)] $PSL(2,q)$ has a unique conjugacy class of subgroups of order $q/2
$. Indeed, each of these lie in a unique Frobenius subgroup of $PSL(2,q)$ of
order $q(q-1)$.
\end{enumerate}

By (3), in the sequel, we may assume that $T$ is the copy of $PSL(2,q)$ inside $PGL(3,q)$
preserving a fixed $\mathcal{J}$ and, as pointed out in \cite[Section 2.6]{BDD}, we may identify
the point set of $\mathcal{D}$ with $\mathcal{E}$, the set of lines $%
PG(2,q)$ external to $\mathcal{J}$.

\bigskip

Let $S$ be any Sylow $2$-subgroup of $T$. Then $S$ is an elementary abelian $2$-group and $N_{T}(S)=S:K$, where $K$ is cyclic of order $q-1$. Then $S\setminus \{1\}=\left\{\sigma _{1},...,\sigma
_{q-1}\right\}$, where $o(\sigma_{i})=2$, and $K$ acts regularly on $S\setminus \{1\}$.
\begin{lemma}\label{Frob}
The following hold:
\begin{enumerate}
    \item $S$ fixes the nucleus $N$, a unique point $Q$ of $\mathcal{C}$ and acts regularly on $\mathcal{C}\setminus \left\{ Q\right\}$;
    \item $S$ fixes $t$ pointwise, where $t=NQ$, and acts semiregularly on $PG(2,q)\setminus t$;
    \item  $\sigma _{i}$ is a $(P_{\sigma_{i}},t)$-elation of $PG(2,q)$;
    \item  $C$ acts regularly on $t\setminus \left\{ N,Q\right\}=\left\{ P_{\sigma _{1}},...,P_{\sigma _{q-1}}\right\}$;
    \item If $E_{i}$ is the set of $q/2$ lines through $P_{\sigma_{i}}$ which are external to $\mathcal{J}$ (see (2)), then $S$ acts transitively on $E_{i}$ with action kernel $\left\langle \sigma _{i} \right\rangle$.
\end{enumerate}
\end{lemma}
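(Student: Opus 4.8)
The plan is to exploit the structure $N_T(S)=S:K$ together with facts (1)--(5), translating group actions on $\mathcal{J}$ and on $PG(2,q)$ into combinatorial statements. First I would use fact (3): $T\cong PSL(2,q)$ fixes the nucleus $N$ of $\mathcal{J}=\mathcal{C}\cup\{N\}$ and acts $2$-transitively on $\mathcal{C}$, which has $q+1$ points. A Sylow $2$-subgroup $S$ of $T$ has order $q$ and, in the $2$-transitive action on $\mathcal{C}$, the point stabilizer $T_Q$ is a Frobenius group of order $q(q-1)$ whose Sylow $2$-subgroup is exactly $S$ (here I invoke fact (5): the subgroups of order $q/2$ sit inside such Frobenius groups, and more simply the classical subgroup structure of $PSL(2,q)$). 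Hence $S$ fixes a unique point $Q$ of $\mathcal{C}$ and acts regularly on the remaining $q$ points $\mathcal{C}\setminus\{Q\}$; since $S$ also fixes $N$, part~(1) follows, and the line $t=NQ$ is $S$-invariant.

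Next, for part~(3), I would take any involution $\sigma_i\in S$. By fact~(4), $\sigma_i$ is a $(P_{\sigma_i},t_{\sigma_i})$-elation, fixing pointwise the axis $t_{\sigma_i}$ and fixing the pencil of lines through the center $P_{\sigma_i}\in t_{\sigma_i}$, with no other fixed points or lines. Since $\sigma_i$ fixes both $N$ and $Q$ (as $S$ is abelian and fixes $\{N,Q\}$ pointwise by part~(1), because $N$ is unique and $Q$ is unique among fixed points of $\mathcal{C}$ under $S$ — one must check $\sigma_i$ cannot swap anything, but $S$ abelian forces each $\sigma_i$ to fix $Q$), the line $t=NQ$ contains two fixed points of $\sigma_i$; as the only fixed line of $\sigma_i$ not through $P_{\sigma_i}$ is the axis, and the axis is the unique line with $q+1$ fixed points, we get $t_{\sigma_i}=t$, proving~(3) and that all $\sigma_i$ share the common axis $t$. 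This immediately gives the first half of~(2): $S=\langle\sigma_1\rangle\cdots$ is generated by elations with common axis $t$, hence fixes $t$ pointwise; and $S$ is semiregular off $t$ because any nontrivial element fixing a point outside $t$ would, being a product of elations with axis $t$, have to be an elation or homology fixing too much, contradicting~(4) (concretely: a nonidentity element of an elation group with common axis and fixed point off the axis must be trivial). For part~(4): $S$ fixes $t$ pointwise, so $K$ acts on $t$; $K$ normalizes $S$ and permutes $S\setminus\{1\}$ regularly, hence permutes the centers $\{P_{\sigma_1},\dots,P_{\sigma_{q-1}}\}$ in the same regular fashion, and it fixes $N$ and $Q$ (the only points of $t$ not of this form, since an elation group of order $q$ with axis $t$ has $q-1$ distinct centers forming $t\setminus\{N,Q\}$ — I would verify the centers are distinct and exhaust $t$ minus two points by a counting/orbit argument using that $S$ acts semiregularly off $t$ and $|t\setminus\{N,Q\}|=q-1$). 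So $C$ (the conjugating cyclic group, $=K$) acts regularly on $t\setminus\{N,Q\}$, giving~(4).

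Finally, for part~(5): fix $P_{\sigma_i}$ and let $E_i$ be the set of $q/2$ external lines to $\mathcal{J}$ through $P_{\sigma_i}$ (this count is fact~(2), applied to the point $P_{\sigma_i}\in t\subseteq PG(2,q)\setminus\mathcal{J}$). Now $\sigma_i$ fixes every line through its center $P_{\sigma_i}$ by fact~(4), so $\langle\sigma_i\rangle$ lies in the kernel of the $S$-action on $E_i$. Conversely $S/\langle\sigma_i\rangle$ acts on $E_i$, and I would show this action is transitive and semiregular, hence regular, by arguing that the stabilizer in $S$ of a line $\ell\in E_i$ is exactly $\langle\sigma_i\rangle$: an element of $S$ fixing $\ell$ fixes $\ell\cap t$ (a point of $t$) and fixes $P_{\sigma_i}$, but more to the point $\ell$ is external to $\mathcal{J}$ so meets $t$ only at $P_{\sigma_i}$ (since $t$ is a secant of $\mathcal{J}$ through $N$ and $Q$); an element $s\in S$ with $s(\ell)=\ell$ and $s$ semiregular off $t$ must fix $\ell$ pointwise or fix no point of $\ell\setminus t$, and combined with $s$ being a product of elations with center on $t$ and axis $t$, one deduces $s\in\langle\sigma_i\rangle$ — here the cleanest route is to note $|S|=q$, $|E_i|=q/2$, the kernel contains $\langle\sigma_i\rangle$ of order $2$, so the induced group has order dividing $q/2=|E_i|$, and transitivity (from $K$-symmetry or a direct orbit-counting on the $q^2-q$ lines external to $\mathcal{J}$ distributed among the $q-1$ points of $t\setminus\{N,Q\}$) forces the order to equal $q/2$, i.e. the kernel is exactly $\langle\sigma_i\rangle$.

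The main obstacle I anticipate is part~(5), specifically proving transitivity of $S$ (equivalently $S/\langle\sigma_i\rangle$) on $E_i$ rather than merely bounding orbit sizes: one needs to rule out $S$ having two or more orbits on the $q/2$ external lines through $P_{\sigma_i}$. I would handle this by a double-counting argument — $S$ acts semiregularly on the $q^2-1$ points of $PG(2,q)\setminus\mathcal{J}$ (part~(2)) but here we need the action on external \emph{lines}: count incident pairs $(P_{\sigma_j},\ell)$ with $\ell\in\mathcal{E}$ and $P_{\sigma_j}\in t\setminus\{N,Q\}$, use that $\mathcal{E}$ is $T$-invariant and $K$ permutes the $P_{\sigma_j}$ transitively while $K$ also permutes the global set $\mathcal{E}$, to conclude each external line through a center $P_{\sigma_i}$ is moved onto every other such line by some element of $S$ — alternatively, identify $E_i$ with a coset geometry and use that the elation group $S$ with axis $t$ acts on the affine pencil through $P_{\sigma_i}$ minus $t$ as a regular translation group, among which the external lines form a single $S$-orbit because $\langle\sigma_i\rangle$ stabilizes each line and $S/\langle\sigma_i\rangle$ acts regularly on the $q/2$ directions. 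Once transitivity is secured, the kernel computation is forced by the order count, completing the lemma.
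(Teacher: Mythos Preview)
Your treatment of parts (i)--(iii) is essentially the paper's argument: use the $2$-transitive action of $T$ on $\mathcal{C}$ to pin down $Q$, then use fact~(4) to identify each $\sigma_i$ as an elation whose axis must be $t=NQ$ since $\sigma_i$ fixes $N$ and $Q$.

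Where you diverge is in (iv) and especially (v), and there is a real gap. You flag transitivity of $S$ on $E_i$ as ``the main obstacle'' and propose double-counting or coset-geometry workarounds, but you overlook the hypothesis that makes this immediate: in the type~I setting the point set of $\mathcal{D}$ has been identified with the set $\mathcal{E}$ of external lines to $\mathcal{J}$, and $T_x\cong D_{2(q+1)}$ for each such line $x$. So for any $y\in E_i$ one has $S_y\leq T_y\cong D_{2(q+1)}$; since $q$ is even, the Sylow $2$-subgroup of $D_{2(q+1)}$ has order $2$, forcing $|S_y|\leq 2$. As $\sigma_i\in S_y$, this gives $S_y=\langle\sigma_i\rangle$ for every $y\in E_i$, and then $|y^S|=|S|/|S_y|=q/2=|E_i|$ yields transitivity and the kernel simultaneously by orbit--stabilizer. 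No double-counting is needed.

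The same hypothesis is what drives the paper's proof of (iv), which differs from your sketch. You try to show the centres $P_{\sigma_i}$ are pairwise distinct and then fill $t\setminus\{N,Q\}$ by counting, but distinctness of the centres is not obvious a priori (many elations can share a centre). The paper instead first shows $K$ is regular on $t\setminus\{N,Q\}$: if a nontrivial $\psi\in K$ fixed some $P\in t\setminus\{N,Q\}$, then $\psi$ (of odd order dividing $q-1$) would permute the $q/2$ external lines through $P$ and hence fix one, landing $\psi$ in some $T_x\cong D_{2(q+1)}$, which is impossible since $\gcd(q-1,2(q+1))=1$. Regularity of $K$ on both $t\setminus\{N,Q\}$ and $S\setminus\{1\}$ then forces $\{P_{\sigma_1},\dots,P_{\sigma_{q-1}}\}=t\setminus\{N,Q\}$, with distinctness coming for free.
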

\begin{proof}
The group $G$ fixes $N$ by (3), so does $S$. Moreover, $S$ fixes a unique point $Q$ of $\mathcal{C}$ and acts
regularly on $\mathcal{C}\setminus \left\{ Q\right\}$ since $T$ acts $2$-transitively on the $q+1$ points of $\mathcal{C}$, which is (i). Thus $S$ preserves $t$, where $t=NQ$. The actions of $S$ on $\mathcal{C}$ and on the set of $q+1$ lines through $N$ are equivalent, as these are tangents to $\mathcal{C}$ by (1), hence $S$ and acts semiregularly on $%
PG(2,q)\setminus t$.

Each $\sigma _{i}$ is a $(P_{\sigma
_{i}},t_{\sigma _{i}})$-elation of $PG(2,q)$ for each $i=1,...,q-1 $ by (4). Then $t_{\sigma _{i}}=t
$ since $S$, and hence $\sigma _{i}$, fixes $N$ and $Q$. Thus $\sigma _{i}$ fixes $t$ pointwise for each $i=1,...,q-1$, and hence $S$
fixes $t$ pointwise. This proves (ii) and (iii).

If $P_{\sigma _{i}}= N$, then $\sigma_{i}$ preserves each of the $q+1$ lines through $N$ by (4), and hence $\sigma _{i}$ fixes $\mathcal{C}$ pointwise since $\sigma _{i}$ preserve $\mathcal{C}$ and each line through $N$ is tangent to $\mathcal{C}$. This contradicts (i), hence $P_{\sigma _{i}} \neq N$.

If $P_{\sigma _{i}}= Q$, then $\sigma_{i}$ preserves each of the $q$ secants to $\mathcal{C}$ through $Q$ by (4), and hence $\sigma _{i}$ fixes $\mathcal{C}$ pointwise, and we again reach a contradiction. Thus, $P_{\sigma _{i}} \neq Q$.
It follows from (ii) that, $t$ is the set of points of $PG(2,q)$ fixed by $S$. Then $K$ preserves $t$ since $K$ normalizes $S$. Further $K$ fixes $N$ since $T$ does it, and $K$ fixes $Q$ since $\{Q\}=t \cap \mathcal{C}$ and $K$ preserves both $t$ and $\mathcal{C}$.

If there is a non-trivial element $%
\psi $ of $K$ fixing a point $P$ on $t\setminus \left\{ N,Q\right\} $, then $%
\psi $ fixes one of the $\frac{q}{2}$ external lines to $\mathcal{J}$
containing $P$ by (ii), say $x$. So $\psi \in T_{x}$, whereas $T_{x}\cong D_{2(q+1)}$%
. Thus $K$ acts regularly on $t\setminus \left\{ N,Q\right\} $. Then $t\setminus \left\{ N,Q\right\} =\left\{ P_{\sigma _{1}},...,P_{\sigma _{q-1}}\right\}$ since we have seen that $\left\{ P_{\sigma _{1}},...,P_{\sigma _{q-1}}\right\}$ is a subset of $t\setminus \left\{ N,Q\right\}$ and $K$ acts regularly on $ S\setminus \{1\}=\left\{\sigma _{1},...,\sigma_{q-1}\right\}$. This proves (iv).

Since $S$ preserves $\mathcal{C}$, fixes $N$, and fixes each $P_{\sigma_{i}}$ by (ii), it follows that $S$ permutes the set $E_{i}$ of $q/2$ lines through $P_{\sigma_{i}}$ which are external to $\mathcal{J}$. Now, $\sigma _{i}$ preserves each line containing $P_{\sigma _{i}}$ by (4), hence $\left\langle \sigma _{i} \right\rangle$ lies in the action kernel $A$ of $S$ on $E_{i}$. On the other hand, if $y \in E_{i}$, then  $A \leq S_{y} =\left\langle \sigma _{i} \right\rangle $ since  $S_{y} \leq T_{y}\cong D_{2(q+1)}$ with $q$ even. Thus $A=S_{y}=\left\langle \sigma _{i} \right\rangle$ and $S$ acts transitively on $E_{i}$. This proves (v).  $\hfill\square$
\end{proof}

\bigskip

\begin{proof}[of Theorem \ref{soon}]\,\,
Let $B$ be any block of $\mathcal{D}$. Then the order of $T_{B}$ is either $q/2$ or $q$, and in both cases we may assume that $T_{B}$ is a fixed subgroup of $S$ by (5). 

Suppose that $\left\vert T_{B} \right\vert=q/2$. Then $T$ acts flag-transitively on $\mathcal{B}$. Hence $B=\ell ^{T_{B}}$, where $\ell $ is a suitable external line to $%
\mathcal{J}$. Without loss of generality, we may assume that $T_{B}\setminus \left\{ 1\right\}
=\left\{ \sigma _{1},...,\sigma _{q/2-1}\right\}$.

Let $R$ be the
intersection point of $\ell $ with $t$. Then $R\neq Q,N$ since $Q\in 
\mathcal{C}$, $N$ is the nucleus of $\mathcal{J}$ and $\ell$ is external to $\mathcal{J}$
(see \cite[Corollary 8.8]{Hir}). Then $R=P_{\sigma _{i}}$ for a unique $i\in
\left\{ 1,...,q-1\right\} $ since $t\setminus \mathcal{J}=\left\{ P_{\sigma
_{1}},...,P_{\sigma _{q-1}}\right\} $, and hence $\ell \in E_{i}$, where $E_{i}$ is the set of $q/2$ lines through $P_{\sigma_{i}}$ which are external to $\mathcal{J}$ (see Lemma \ref{Frob}(v)). Then $B \subseteq E_{i}$ since $T_{B}$ fixes $P_{i}$, being $T_{B}$ a subgroup of $S$. Actually, $B=E_{i}$ since $k=q/2$. Then $T_{\ell, B} = \left \langle \sigma _{i}\right\rangle$ by Lemma \ref{Frob}(v), and hence $k=\left\vert B\right\vert =\left\vert \ell ^{T_{B}}\right\vert =q/4$, a contradiction.

Suppose that $\left\vert T_{B} \right\vert=q$. Then $\left\vert T:T_{B}\right\vert=q^{2}-1$, and hence $\left\vert G:G_{B}\right\vert=2\left\vert T:T_{B}\right\vert$. Then $\mathcal{B}=B^{G}=B^{T}\cup C^{T}$ with $B$ and $C$ blocks of $\mathcal{D}$ both preserved by $T_{B}$. Then $B=\ell ^{T_{B}}$ and $C=m ^{T_{B}}$ with $\ell$ and $m$ suitable external lines to $\mathcal{J}$. Now, arguing as above, one has $B=E_{i}$ and $C=E_{j}$, where $E_{i}$ and $E_{j}$ are the set of $q/2$ external lines to $\mathcal{J}$ through $P_{\sigma_{i}}$ and $P_{\sigma_{j}}$, respectively. Moreover, $P_{\sigma_{i}}\neq P_{\sigma_{j}}$ since $E_{i}=B\neq C=E_{j}$ by (2). 

Let $\varphi$ be the unique element of $K$ such that $P_{\sigma_{i}}^{\varphi}=P_{\sigma_{j}}$ by Lemma \ref{Frob}(iv), then $E_{i}^{\varphi}=E_{j}$ since $\varphi$ preserves $\mathcal{J}$, and $E_{i}$ and $E_{j}$ are the set of all (namely $q/2$ each) external lines to $\mathcal{J}$ through $P_{\sigma_{i}}$ and $P_{\sigma_{j}}$, respectively. That is $B^{\varphi}=C$ and hence $\mathcal{B}=B^{G}=B^{T}\cup C^{T}=B^{T}$, whereas $\left\vert G:G_{B}\right\vert=2\left\vert T:T_{B}\right\vert$, a contradiction. This completes the proof.  $\hfill\square$
\end{proof}

\begin{remark}
It is not difficult to see that $(\mathcal{P}, B^{G})$, where $\mathcal{P}$ is the set of external lines to $\mathcal{J}$, and $B=\ell^{S}$ with $\ell \in \mathcal{P}$ is an isomorphic copy of the Witt Bose-Shrikhande linear space $W(q)$, $q \geq 8$ even.

\end{remark}

\subsection{Exclusion of the $2$-designs of type II}
In this section, we prove the following theorem, thus completing the proof of Theorem \ref{TH1}. 

\begin{theorem}\label{main} 
$\mathcal{D}$ is not a $2$-design of type II.
\end{theorem}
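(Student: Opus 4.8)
\textbf{Proof proposal for Theorem \ref{main}.}

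The plan is to mimic the geometric strategy used for type I designs, but now with $q$ odd, so that the relevant invariant structure is an irreducible conic $\mathcal{C}$ of $PG(2,q)$ rather than a hyperoval. By item (3)-style facts (the copy of $PSL(2,q)$ in $PGL(3,q)$ stabilizing a conic), we may take $T = PSL(2,q)$ acting on $PG(2,q)$ preserving $\mathcal{C}$, and identify the point set $\mathcal{P}$ of $\mathcal{D}$ with a suitable $T$-orbit of lines of $PG(2,q)$. Recall that for $q$ odd the $q(q-1)/2$ external lines (those meeting $\mathcal{C}$ in no point) form one $T$-orbit and the $q(q+1)/2$ secant lines form another; since $v = q(q-1)/2$ and $T_x \cong D_{q+1}$, the point set $\mathcal{P}$ must be identified with the set $\mathcal{E}$ of external lines, because the stabilizer in $PSL(2,q)$ of an external line is dihedral of order $q+1$ (an external line corresponds to a pair of conjugate points of the conic over $GF(q^2)$, equivalently a non-split torus), whereas a secant line has stabilizer $D_{q-1}$. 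This pins down the geometric model.

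Next I would analyse the block stabilizer. We have $T_B \cong D_{q-1}$ with $|T:T_B| = b_T = q(q+1)/2$ or $q(q+1)$ depending on whether $T$ is block-transitive; in either case $T_B$ is a dihedral subgroup of order $q-1$, which is the stabilizer in $T$ of a secant line $s$ (equivalently, of an unordered pair of points of $\mathcal{C}$), and also fixes the pole $P_s$ of $s$ with respect to the conic's polarity, a point not on $\mathcal{C}$ of ``external'' type. So a block $B = \ell^{T_B}$ for some external line $\ell$, and I would study the orbit of $T_B \cong D_{q-1}$ on the set $\mathcal{E}$ of external lines, aiming to show that no orbit of $T_B$ on $\mathcal{E}$ has the right size $k = q+1$, or that, even if one does, the resulting incidence structure fails to be a $2$-design (each pair of points in exactly two blocks). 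The cyclic subgroup $C_{q-1} \le T_B$ of order $(q-1)/2$ (the split torus) fixes two points of $\mathcal{C}$, hence fixes the secant $s$ and its pole, and acts with understandable orbit structure on the lines through $P_s$ and on the other external lines; combining this with the involutions in $T_B$ (which are $(P,t)$-homologies, not elations, since $q$ is odd) should give a clean description of the possible block sizes modulo the arithmetic constraint $k = q+1$, $r = q-1$, $vr = bk$.

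The key counting identity to exploit is that through each point of $PG(2,q) \setminus \mathcal{C}$ of external type there pass $(q-1)/2$ external lines and $(q+1)/2$ secants (and dually), together with the fact that two external lines meet in a point whose type (internal/external, on or off $\mathcal{C}$) is controlled by whether the line joining... — more precisely, I would count, for a fixed external line $\ell$ and the putative block $B \ni \ell$, how many blocks contain both $\ell$ and a second external line $\ell'$, and force this to equal $2$ for all $\ell'$; the geometry of pencils of lines through the various special points (poles of secants, points of $\mathcal{C}$, internal points) should make the count come out wrong for all configurations, exactly as the elation geometry did in Theorem \ref{soon}. I would also separately rule out the non-block-transitive sub-case (where $\mathcal{B}$ splits into two $T$-orbits) by the same conjugacy-of-tori argument used at the end of the proof of Theorem \ref{soon}: an element of the normalizer identifying the two relevant poles would fuse the two orbits, contradicting $|G:G_B| = 2|T:T_B|$.

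\textbf{Main obstacle.} The hard part will be the combinatorial bookkeeping for the orbits of the dihedral group $D_{q-1}$ on the $q(q-1)/2$ external lines: unlike the Sylow $2$-subgroup case of Theorem \ref{soon}, where regularity and semiregularity made the orbit lengths transparent, here $D_{q-1}$ has both a cyclic part fixing a secant and its pole and involutions acting as homologies with an axis and a center, so its orbits on $\mathcal{E}$ come in several lengths ($1$, $2$, $(q-1)/2$, $q-1$), and I must determine precisely which external lines lie in an orbit of length $q+1$ — there may be none, in which case the theorem follows immediately, but verifying this (and, if some orbit of length $q+1$ exists, checking the $2$-design condition fails) requires a careful case analysis of the mutual position of $\ell$, the secant $s$, the pole $P_s$, and the conic. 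The subtlety is that $q+1$ is coprime to $|D_{q-1}| = q-1$ unless we are in very small characteristic-related coincidences, so in fact $D_{q-1}$ has \emph{no} orbit of length $q+1$ on anything (an orbit length must divide the group order), which already gives a contradiction with $|B| = k = q+1$ under block-transitivity of $T$ — meaning the genuine work is concentrated in the non-block-transitive case, where $|T_B| = q-1$ still but one must instead realize $B$ as a $T_B$-orbit of a different size or handle $G_B$ properly, and it is there that the pole-fusing argument must be made airtight.
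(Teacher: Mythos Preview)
Your proposal rests on a misreading of the parameters: in a type~II design one has $k=q-1$ and $r=q+1$, not the other way round. (Proposition~\ref{D2(q+1)even}(II) as stated contains a typo swapping them, but every subsequent line of the paper --- e.g.\ ``any block of $\mathcal{D}$ is a suitable subset of size $q-1$ of $I$'' and the phrase ``there are no $2$-$(\tfrac{q(q-1)}{2},q-1,2)$ designs'' in the proof of Theorem~\ref{main} --- uses $k=q-1$; you can also check directly that $vr=bk$ fails with your values.) Consequently your headline observation, that $q+1$ cannot divide $|D_{q-1}|=q-1$ and hence no $T_B$-orbit has length $q+1$, is irrelevant: the block size equals the order of $T_B$, regular $T_B$-orbits are exactly the right size, and the paper exhibits many of them. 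There is also no non-block-transitive subcase to worry about, since $|T:T_B|=q(q+1)/2=b$, so $T$ is automatically block-transitive and your pole-fusing endgame never arises.

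With the correct $k$, the problem is genuinely hard, and your outline does not contain the missing idea. The paper works in the polar-dual of your model (internal points of $\mathcal{C}$ rather than external lines --- equivalent via the polarity, so your identification is fine), pins down the $T_B$-orbits of length $q-1$ explicitly as the punctured conics $\mathcal{C}_h^\ast$ in the pencil bitangent at the two fixed points of $T_B$ on $\mathcal{C}$ (Proposition~\ref{orbit}, Lemma~\ref{BOO}), and then --- this is the real content --- shows by direct matrix computation with an explicit system of coset representatives of $T_B$ in $T$ (Lemma~\ref{LSR1}, Proposition~\ref{BF}) that certain pairs of collinear internal points on the line $X_1=0$ lie in at least four distinct blocks, violating $\lambda=2$. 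The cases $q\equiv -1\pmod 4$ (Proposition~\ref{q=1mod4}) and $q\equiv 1\pmod 4$ must be handled separately, and in the latter one also has to deal with candidate blocks of the form $\mathcal{O}_{h,i}\cup\mathcal{O}_{h^{p^m},j}$ coming from two \emph{different} conics in the pencil, fused by a field automorphism in $G_B$. None of this structure --- the pencil of conics, the explicit coset system, the four-solutions-to-$\omega^{4u}=\omega^{4u_0}$ trick that produces four blocks through a pair --- is anticipated by your sketch; the sentence ``the geometry \ldots\ should make the count come out wrong'' is precisely the several pages of computation the paper has to carry out.
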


\medskip
Throughout the remainder of the paper, we denote by $X$ the copy of $PGL(2,q)$ contained in $Aut(T)$.

\begin{lemma}\label{mom} If $q\equiv -1\pmod{4}$, then one of the following holds:
\begin{enumerate}
    \item $T$ acts flag-transitively on $\mathcal{D}$;
    \item $\mathcal{D}$ admits $X$ as a flag-transitive automorphism group,  and each block $B$ of $\mathcal{D}$ is a union of two $T_{B}$-orbits of length $\frac{q-1}{2}$.
\end{enumerate}
\end{lemma}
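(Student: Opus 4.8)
The plan is to show that $T$ has at most two orbits on the flags of $\mathcal{D}$, and then to dispose of the two resulting cases, with the hypothesis $q\equiv-1\pmod{4}$ entering only in the second. For the setup, recall from Proposition~\ref{D2(q+1)even}(II) that a point stabiliser and a block stabiliser in $T$ are $T_x\cong D_{q+1}$ and $T_B\cong D_{q-1}$, of indices $\tfrac{q(q-1)}{2}=v$ and $\tfrac{q(q+1)}{2}=b$ in $T$; hence $T$ is both point-transitive and block-transitive, and from the identities $bk=vr$ and $r(k-1)=2(v-1)$ one gets $k(k-1)=(q-1)(q-2)$, so $k=q-1$ and $r=q+1$. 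Since $G$ is flag-transitive, the block stabiliser $G_B$ is transitive on the $q-1$ points of $B$, and the point stabiliser $G_x$ is transitive on the $q+1$ blocks through $x$.

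Now set $c=|T_x\cap T_B|$, the order of a flag stabiliser in $T$; this is well defined up to conjugacy, since $T\trianglelefteq G$ and $G$ is flag-transitive. As $T$ is block-transitive, the $T$-orbits on the flag set are in natural bijection with the $T_B$-orbits on the $q-1$ points of $B$; since $T_B\trianglelefteq G_B$ and $G_B$ is transitive on these points, all such orbits have the common size $\tfrac{q-1}{c}$, and there are exactly $c$ of them, so $c\mid q-1$. The symmetric argument from the point side, with $T_x\trianglelefteq G_x$ acting on the $q+1$ blocks through $x$, gives $c\mid q+1$; hence $c\mid\gcd(q-1,q+1)=2$ as $q$ is odd. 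If $c=1$, then $T$ is transitive on flags, which is conclusion~(i).

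There remains the case $c=2$. Then each block $B$ is the union of its two $T_B$-orbits, each of length $\tfrac{q-1}{2}$ --- this is the description of the blocks in conclusion~(ii) --- so it suffices to exhibit $X=\PGL(2,q)$ as a flag-transitive automorphism group. This is where $q\equiv-1\pmod{4}$ is used: an odd perfect square is $\equiv1\pmod{4}$, so $q$ is not a square, $f$ is odd, and therefore $\mathrm{Out}(T)\cong Z_2\times Z_f$ is cyclic, with a unique involution --- namely the one whose preimage in $\Aut(T)$ is $X$. Now $G/T$ is a cyclic subgroup of $\mathrm{Out}(T)$, and since $G$ permutes the two $T$-orbits on flags transitively, $|G/T|=2m$ with $m\mid f$ odd, the kernel of this action being the index-$2$ subgroup $TG_{x,B}/T$ of $G/T$, of odd order $m$. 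Hence the unique involution $\bar\theta$ of $\mathrm{Out}(T)$ lies in $G/T$ but not in $TG_{x,B}/T$, so any preimage $\theta\in G$ of $\bar\theta$ interchanges the two $T$-orbits on flags. Consequently $\langle T,\theta\rangle$, being the preimage in $\Aut(T)$ of the order-$2$ subgroup $\langle\bar\theta\rangle$ of $\mathrm{Out}(T)$, equals $X$; and $X\le G\le\Aut(\mathcal{D})$ is flag-transitive, since it contains $T$ (which has two orbits on flags) together with the orbit-swapping element $\theta$. This is conclusion~(ii).

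I expect the bound $c\le2$ to be routine. The main obstacle is the case $c=2$, where the real work is to turn the congruence $q\equiv-1\pmod{4}$ into the parity statement ``$f$ is odd'', to deduce from it that $\mathrm{Out}(T)$ is cyclic, and then to track the subgroup lattice of $G/T$ closely enough to force the flag-orbit-swapping element to lie inside $\PGL(2,q)$ rather than inside some overgroup of $T$ built from field automorphisms. A secondary point that needs care is the counting behind ``$T$ has exactly $c$ orbits on flags, in bijection with the $T_B$-orbits on the points of $B$'', which rests on $T\trianglelefteq G$, the block-transitivity of $T$, and the transitivity of $G_B$ on the points of $B$.
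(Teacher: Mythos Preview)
Your proof is correct and follows essentially the same approach as the paper: both split on whether $|T_{x,B}|$ is $1$ or $2$, and in the second case use $q\equiv -1\pmod 4\Rightarrow f$ odd to control $G/T$ inside $\mathrm{Out}(T)$. Your packaging of the second case---identifying $X/T$ as the unique involution of the cyclic group $\mathrm{Out}(T)\cong Z_{2f}$ and showing it lies outside the odd-order kernel $TG_{x,B}/T$---is a little cleaner than the paper's two separate contradiction arguments (first that $X\le G$, then that $X_{x,B}\cong Z_2$), but the substance is the same.
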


\begin{proof}
Let $(x,B)$ any flag of $\mathcal{D}$. Then $\left\vert (x,B)^{G}\right\vert
=\frac{q(q^{2}-1)}{2}$ by the flag-transitivity of $G$ on $\mathcal{D}$. We know that $%
T\trianglelefteq G$ and $T_{x,B}\leq Z_{2}$. If $T_{x,B}=1$ then $\left\vert G:G_{x,B}\right\vert =\left\vert
T:T_{x,B}\right\vert $. Thus $T$ acts flag-transitively on $\mathcal{D}$, and we obtain (1).

If $T_{x,B}=Z_{2}$, then $\left\vert G:G_{x,B}\right\vert =2\left\vert
T:T_{x,B}\right\vert $ and $B$ is a union of two $T_{B}$-orbits of length $%
\frac{q-1}{2}$. Then $\left\vert G:T\right\vert =2\left\vert
G_{x,B}:T_{x,B}\right\vert $, and hence the order of $G/T$ is even.

If $X\nleq G$, then $G \cap X=T$ and hence $G/T \cong GX/X \leq Out(T)/X \cong Z_{f}$, where $f=\log _{p}(q)$. Then $f$ is even since $G/T$ has even order, and hence $q\equiv 1\pmod{4}$, a contradiction. Thus $X\leq G$. 

Now, $Z_{2}= T_{x,B}\leq X_{x,B}\leq Z_{2}\times Z_{2}$. If $X_{x,B}\cong
Z_{2}\times Z_{2}$ and so $G\neq X$ and $\left\vert G:X\right\vert
=2\left\vert G_{x,B}:X_{x,B}\right\vert $ since $X\vartriangleleft G$. Thus $%
f$ is even since $G/X \leq  Z_{f}$, and again $q\equiv 1\pmod{4}$, which is not the case. Thus $X_{x,B}=Z_{2}$, and hence $X$ acts flag-transitively on $\mathcal{D}$, which is (2). $\hfill\square$
\end{proof}

\subsubsection{Identification of the blocks of $\mathcal{D}$}

Since $PGL (3,q)$ has one conjugacy class of subgroups
isomorphic to $PSL(2,q)$ and each of these preserves a unique irreducible
conic of $PG(2,q)$, we may assume that $T\cong PSL(2,q)$ is the stabilizer of the conic%
\[
\mathcal{C}:X_{0}X_{2}-X_{1}^{2}=0. 
\]
by \cite[Corollary 7.14]{Hir}. More details on conics can be found in \cite[Section 7.2]{Hir}. A point $P$ of $PG(2,q)$ is \emph{internal} or \emph{external} to $%
\mathcal{C}$ according as it lies on $0$ or $2$ tangents to $\mathcal{C}$ by \cite[Section 8.2]{Hir}.
Let $I$ and $E$ be the set of internal and external points to $\mathcal{C}$.
Then $\left\vert I\right\vert =\frac{q(q-1)}{2}$ and $\left\vert
E\right\vert =\frac{q(q+1)}{2}$ by \cite[Section 8.2]{Hir}. In the sequel, $P$ will be called $I$\emph{%
-point of }$\mathcal{C}$, or $E$\emph{-point of }$\mathcal{C}$ according as
it lies in $I$ or $E$, respectively.

Note that, a point $P$ of $PG(2,q)$ is a $I$-point (resp. $E$-point) of $%
\mathcal{C}$ if and only if $P^{\pi }$ is a external (resp. secant) to $%
\mathcal{C}$ by \cite[Theorem 8.16]{Hir}, where 
\[
\pi:
\left( 
\begin{array}{ccc}
Y_{0},&Y_{1},&Y_{2}
\end{array}%
\right)
\longmapsto \left( 
\begin{array}{ccc}
Y_{0},&Y_{1},&Y_{2}
\end{array}%
\right) \left( 
\begin{array}{ccc}
0 & 0 & 1/2 \\ 
0 & -1 & 0 \\ 
1/2 & 0 & 0%
\end{array}%
\right)
\left(\begin{array}{c}
X_{0}\\
X_{1}\\
X_{2} 
\end{array}%
\right)
=0
\]%
is the polarity defined by $\mathcal{C}$.

\bigskip

Le $\chi $ be the quadratic character of $GF(q)$, then $\chi (z)$ is $0$, $1$
or $-1$ according as $z$ is either $0$ or is a square, or a non-square of $GF(q)$, respectively.
Let $Q_{+1}$ and $Q_{-1}$ be the set of squares and non-squares of $GF(q)^{\ast}$, respectively,
then $\left\vert Q_{+1}\right\vert =\left\vert Q_{-1}\right\vert =\frac{q-1}{2}
$.

\bigskip

It is well known that, $T$ acts transitively on $I$ and on $E$, and the
stabilizer of a point is isomorphic to $D_{q+1}$ or $D_{q-1}$, respectively.
On the other hand, $T$ has a unique conjugacy class of subgroups isomorphic to $%
D_{q+1}$. Hence, we may identify the point set $\mathcal{P}$ of $\mathcal{D}
$ with $I$. Hence, any block of $\mathcal{D}$ is a suitable subset of size $q-1$ of $%
I $.

\bigskip 
Note that, $T$ is the group consisting of the elements represented by%
\[
\left( 
\begin{array}{ccc}
a^{2} & ab & b^{2} \\ 
2ac & ad+bc & 2bd \\ 
c^{2} & cd & d^{2}%
\end{array}%
\right) 
\]
where $ab-bd\in Q_{+}$ by \cite[Corollary 7.14 and its proof]{Hir}. As note
above, $T$ has a unique conjugacy class of subgroups isomorphic to $D_{q-1}$. Therefore, we may assume that $T_{B}=\left\langle \alpha ^{2},\beta
\right\rangle$ for some block $B$ of $\mathcal{D}$, where $\alpha $ and $\beta $ are respectively represented
by 
\[
\alpha=\left( 
\begin{array}{ccc}
\omega & 0 & 0 \\ 
0 & 1  & 0 \\ 
0 & 0 & \omega^{-1}%
\end{array}%
\right) \text{ and } \beta=\left( 
\begin{array}{ccc}
0 & 0 & 1 \\ 
0 & -1 & 0 \\ 
1 & 0 & 0%
\end{array}%
\right) 
\]%
where $\omega $ is a primitive element of $GF(q)^{\ast}$.

Note that, $T_{B}$
preserves a pencil of bitangent conics in $O=(0,0,1)$ and $P_{\infty
}=(1,0,0)$. The conics in the pencil are:

\begin{enumerate}
\item The irreducible conics $\mathcal{C}_{h}:X_{0}X_{2}-hX_{1}^{2}=0$ for $%
h\in GF(q)^{\ast }$ (here $\mathcal{C}_{1}=\mathcal{C}$);

\item The simply degenerate conic $\mathcal{C}_{0}:X_{0}X_{2}=0$;

\item The doubly degenerate conic $\mathcal{C}_{\infty }:X_{1}^{2}=0$.
\end{enumerate}

\bigskip

In the sequel, for $h\in GF(q)\cup \left\{ \infty \right\} $ we denote the set $\mathcal{C}_{h}\setminus \left\{ O,P_{\infty
}\right\} $ by $\mathcal{C}_{h}^{\ast }$. Hence, $\mathcal{C}_{h}^{\ast }$ is a $(q-1)$-arc of $PG(2,q)$. Further, the set $\{\mathcal{C}_{h}^{\ast }:h\in GF(q)\cup \left\{ \infty \right\}\}$ is a partition of $PG(2,q)\setminus \left\{ O,P_{\infty }\right\} $.

\begin{proposition}\label{orbit}
One of the following holds:
\begin{enumerate}
    \item $q\equiv -1 \pmod{4}$ and one of the following holds:
    \begin{enumerate}
        \item The $T_{B}$-orbits of $I$-points of $\mathcal{C}$ of length $q-1$ are the $(q-1)$-arcs $\mathcal{C}_{h}^{\ast}$ with $h \in (1+Q_{+1})\cap Q_{+1}$.
        \item The $X_{B}$-orbits of $I$-points of $\mathcal{C}$ of length $q-1$, which are a union of two $T_{B}$-orbits each of length $\frac{q-1}{2}$, are the $(q-1)$-arcs $\mathcal{C}_{h}^{\ast}$ with $h \in (1+Q_{+1})\cap Q_{-1}$.
    \end{enumerate}
    \item $q\equiv 1 \pmod{4}$ and the following hold: 
  \begin{enumerate}
      \item The $T_{B}$-orbits of $I$-points of $\mathcal{C}$ of length $q-1$ are the $(q-1)$-arcs $\mathcal{C}_{h}^{\ast}$ with $h \in (1+Q_{-1})\cap Q_{-1}$.
      \item The $T_{B}$-orbits of $I$-points of $\mathcal{C}$ of length $\frac{q-1}{2}$ are the following:
\begin{eqnarray*}
\mathcal{O}_{\infty } &=&\left\{ (\mu,0,1):\mu\in Q_{-1}\right\}, \\
\mathcal{O}_{h,+1} &=&\left\{ \left( h\mu ^{2},\mu ,1\right) :\mu \in
Q_{+1}\right\}, \\
\mathcal{O}_{h,-1} &=&\left\{ \left( h\mu ^{2},\mu ,1\right) :\mu \in
Q_{-1}\right\}
\end{eqnarray*}%
where $h\in  (1+Q_{-1})\cap Q_{+1}$.
  \end{enumerate}  
\end{enumerate}
\end{proposition}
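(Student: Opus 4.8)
The plan is to push everything into the explicit coordinates already fixed and then read the statement off from a few quadratic-character identities. First I would record that not only $T_{B}$ but also $X_{B}=\langle\alpha,\beta\rangle$ fixes every conic $\mathcal{C}_{h}$ of the pencil, since the diagonal $\alpha$ and the anti-diagonal $\beta$ each leave the form $X_{0}X_{2}-hX_{1}^{2}$ invariant; thus each arc $\mathcal{C}_{h}^{\ast}$ is $T_{B}$- and $X_{B}$-invariant and the whole question splits arc by arc. I would then fix the two parametrisations $\mathcal{C}_{h}^{\ast}=\{(1:t:ht^{2}):t\in GF(q)^{\ast}\}=\{(h\mu^{2}:\mu:1):\mu\in GF(q)^{\ast}\}$, matched by $\mu=(ht)^{-1}$, for $h\in GF(q)^{\ast}$; note that $\mathcal{C}_{\infty}^{\ast}=\{(\mu:0:1):\mu\in GF(q)^{\ast}\}$ and that $\mathcal{C}_{0}^{\ast}$ lies on the two lines $X_{0}=0$ and $X_{2}=0$; and recall that $\{\mathcal{C}_{h}^{\ast}:h\in GF(q)\cup\{\infty\}\}$ partitions $PG(2,q)\setminus\{O,P_{\infty}\}$. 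The proposition then reduces to two questions for each arc: which of its points lie in $\mathcal{P}=I$, and how $T_{B}$ (or $X_{B}$) breaks it up.

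For the first question I would use the polarity $\pi$. Writing $Q(Y)=Y_{0}Y_{2}-Y_{1}^{2}$ for the quadratic form of $\mathcal{C}$, a point $P\notin\mathcal{C}$ is an $I$-point precisely when its polar line $P^{\pi}$ is external to $\mathcal{C}$, and substituting the parametrisation $(1:s:s^{2})$ of $\mathcal{C}$ into the equation of $P^{\pi}$ gives a quadratic in $s$ of discriminant $-4Q(P)$; hence $P$ is an $I$-point iff $\chi(Q(P))=-\chi(-1)$. Evaluating, $Q(1:t:ht^{2})=(h-1)t^{2}$, so $\chi(Q(1:t:ht^{2}))=\chi(h-1)$ is independent of $t$, and $\mathcal{C}_{h}^{\ast}$ consists entirely of $I$-points if $h-1\in Q_{-\chi(-1)}$ and entirely of $E$-points otherwise; likewise $Q(\mu:0:1)=\mu$, so $(\mu:0:1)$ is an $I$-point iff $\mu\in Q_{-\chi(-1)}$; and every point of $\mathcal{C}_{0}^{\ast}$ has $Q=-X_{1}^{2}$ with $X_{1}\neq0$, so $\chi(Q)=\chi(-1)$ and $\mathcal{C}_{0}^{\ast}$ contains no $I$-point. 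Therefore $\mathcal{P}$ is the disjoint union of the arcs $\mathcal{C}_{h}^{\ast}$ with $h\in1+Q_{-\chi(-1)}$ and of $\{(\mu:0:1):\mu\in Q_{-\chi(-1)}\}\subseteq\mathcal{C}_{\infty}^{\ast}$, which checks against $|\mathcal{P}|=\frac{q(q-1)}{2}$.

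For the second question I would compute orbits. On $\mathcal{C}_{h}^{\ast}$ ($h\in GF(q)^{\ast}$) the generators act on the parameter by $\alpha:t\mapsto\omega^{-1}t$ and $\beta:t\mapsto-(ht)^{-1}$, so $\alpha^{2}$ acts by $t\mapsto\omega^{-2}t$ with the two orbits $\{t\in Q_{+1}\}$ and $\{t\in Q_{-1}\}$ of length $\frac{q-1}{2}$; and since $\chi(-(ht)^{-1})=\chi(-h)\chi(t)$, the involution $\beta$ preserves each of these orbits when $\chi(-h)=1$ and interchanges them when $\chi(-h)=-1$. Hence $T_{B}$ is transitive on $\mathcal{C}_{h}^{\ast}$ exactly when $\chi(-h)=-1$, and otherwise $\mathcal{C}_{h}^{\ast}$ is a union of two $T_{B}$-orbits of length $\frac{q-1}{2}$, while $X_{B}$ is always transitive on $\mathcal{C}_{h}^{\ast}$ because $\langle\alpha\rangle$ already acts transitively on $GF(q)^{\ast}$. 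Feeding in $\chi(-h)=\chi(-1)\chi(h)$ and separating the cases $q\equiv1$ and $q\equiv-1\pmod4$, this together with the previous paragraph yields (1)(a), (1)(b), (2)(a), and the $\mathcal{O}_{h,\pm1}$ part of (2)(b); for the latter, when $q\equiv1\pmod4$ and $h\in(1+Q_{-1})\cap Q_{+1}$ one has $\chi(h)=1$, so $\mu=(ht)^{-1}$ satisfies $\chi(\mu)=\chi(t)$ and the two half-length orbits are exactly $\mathcal{O}_{h,+1}$ and $\mathcal{O}_{h,-1}$.

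The one step that will need genuine care is the degenerate member $\mathcal{C}_{\infty}^{\ast}$. On the line $X_{1}=0$ the element $\alpha$ acts by $\mu\mapsto\omega^{2}\mu$, not by a generator of $GF(q)^{\ast}$, so $\langle\alpha^{2}\rangle$, acting by $\mu\mapsto\omega^{4}\mu$, has four orbits — the cosets of $\langle\omega^{4}\rangle$, two comprising $Q_{+1}$ and two comprising $Q_{-1}$ — when $q\equiv1\pmod4$, but only the two orbits $Q_{+1},Q_{-1}$ when $q\equiv-1\pmod4$ (where $\langle\omega^{4}\rangle=\langle\omega^{2}\rangle=Q_{+1}$). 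Since $\beta$ acts by $\mu\mapsto\mu^{-1}$, one checks that the $I$-points of $\mathcal{C}_{\infty}^{\ast}$, namely $\{(\mu:0:1):\mu\in Q_{-\chi(-1)}\}$ by the second paragraph, always form a single $T_{B}$-orbit of length $\frac{q-1}{2}$: when $q\equiv-1\pmod4$ this holds already at the level of $\langle\alpha^{2}\rangle$, while when $q\equiv1\pmod4$ it is $\beta$ that fuses the two $\langle\alpha^{2}\rangle$-orbits lying inside $Q_{-1}$. In case (2) this orbit is $\mathcal{O}_{\infty}$, completing (2)(b); in case (1) it is already a full $X_{B}$-orbit of length $\frac{q-1}{2}$ (its complement in $\mathcal{C}_{\infty}^{\ast}$ consists of $E$-points, which $X_{B}$ cannot merge with $I$-points), so it never has length $q-1$ and is correctly absent from (1). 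Putting the three pieces together gives exactly the lists in the statement; beyond this $\mathcal{C}_{\infty}$ bookkeeping and keeping the factor $\chi(-1)$ straight, the argument is routine substitution.
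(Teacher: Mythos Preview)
Your proposal is correct and follows essentially the same route as the paper: determine which arcs $\mathcal{C}_h^{\ast}$ consist of $I$-points via the polarity, then compute the $\langle\alpha^2\rangle$-orbits on each arc and decide whether $\beta$ fuses them by a quadratic-character condition on $h$. Your packaging via the single criterion $\chi(Q(P))=-\chi(-1)$ is a mild streamlining of the paper's case-by-case polar-line computation, and you are more explicit than the paper about why the $I$-points on $\mathcal{C}_\infty^{\ast}$ form a single $T_B$-orbit (the paper asserts this without verifying the fusion under $\beta$ when $q\equiv1\pmod4$), but the underlying argument is the same.
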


\begin{proof}
Let $A=(\mu,0,1)$ be a point of $\mathcal{C}_{\infty }$. Then $A^{\pi
}:X_{0}+\mu X_{2}=0$. Hence 
\[
A\in I\Longleftrightarrow \left\vert A^{\pi }\cap \mathcal{C}\right\vert
=0\Longleftrightarrow \mu Z^{2}+1=0\text{ has }0\text{ solutions,} 
\]%
where $Z=X_{2}/X_{1}$. This occurs if and only if $-\mu^{-1} \in Q_{-1}$, and hence $\mu$
lies in $Q_{-1}$ or $Q_{+1}$ according as $q \equiv 1 \pmod{4}$ or $q \equiv {-1} \pmod{4}$, respectively. Thus, 
\[
\mathcal{O}_{\infty }=\left\{ (\mu,0,1):\mu\in Q_{-\varepsilon}\right\} 
\]%
is a $T_{B}$-orbit of length $\frac{q-1}{2}$ consisting of $I$ of $\mathcal{C}$ for $q \equiv \varepsilon \pmod{4}$, where $\varepsilon= \pm 1$. 

For $h\in GF(q)^{\ast }\setminus \left\{ 1\right\} $, one has $\mathcal{C}%
_{h}^{\ast }=\mathcal{O}_{h,+1}\cup \mathcal{O}_{h,-1}$, where 
\begin{eqnarray*}
\mathcal{O}_{h,+1} &=&\left\{ \left( h\mu ^{2},\mu ,1\right) :\mu \in
Q_{+1}\right\}, \\
\mathcal{O}_{h,-1} &=&\left\{ \left( h\mu ^{2},\mu ,1\right) :\mu \in
Q_{-1}\right\}
\end{eqnarray*}%
are two orbits, each of length $\frac{q-1}{2}$, under the cyclic subgroup of $T_{B}$ of order $\frac{q-1}{2}$.

Let \ $P_{\mu }=\left( h\mu ^{2},\mu ,1\right)$
be any point of $\mathcal{C}_{h}^{\ast }$ with $h\in GF(q)^{\ast }\setminus
\left\{ 1\right\} $, then $P_{\mu }$ is a $I$-point of $\mathcal{C}$ if and only if $P_{\mu
}^{\pi }$ is an external line to $\mathcal{C}$, where 
\[
P_{\mu }^{\pi }:X_{0}-2\mu X_{1} +h\mu ^{2}X_{2}=0 \textit{.}
\]%
Hence, 
\[
P_{\mu }\in I\Longleftrightarrow \left\vert P_{\mu }^{\pi }\cap \mathcal{C}%
\right\vert =0\Longleftrightarrow h\mu ^{2}Z^{2}-2\mu Z+1=0\text{ has }0\text{
solutions} 
\]%
where $Z=X_{2}/X_{1}$. This happens if and only if $4\mu ^{2}-4\mu ^{2}h\in
Q_{-}$, that is, if and only if $h-1\in Q_{-\varepsilon}$. Further, since $P_{\mu }^{\beta
}=\left( h\left( -1/h\mu\right)^{2},-1/h\mu
,1\right) $, it follows that $\mathcal{O}_{h,j}^{\beta }=\mathcal{O}_{h,j}$, $j=\pm1$, if
and only if $\chi(-h\mu)$=$\chi(\mu)$, that is, only for $\chi(h)=\chi(-1)$ since $\mu \neq 0$.

Assume that $q \equiv 1 \pmod{4}$. The $T_{B}$-orbits of $I$-points of $\mathcal{C}$ of length $\frac{q-1}{2}$ are  $\mathcal{O}_{\infty }$ and $\mathcal{O}_{h,j}$ with $h \in (1+Q_{-1})\cap Q_{+1}$ and $j= \pm 1$. Hence, the $T_{B}$-orbits of $I$-points of $\mathcal{C}$ of length $q-1$ are the $(q-1)$-arcs $\mathcal{C}_{h}^{\ast}$ with $h \in (1+Q_{-1})\cap Q_{-1}$.

Assume that $q \equiv {-1}\pmod{4}$. Arguing as above, it is easy to see that the $T_{B}$-orbits of $I$-points of $\mathcal{C}$ of length $q-1$ are the $(q-1)$-arcs $\mathcal{C}_{h}^{\ast}$ with $h \in (1+Q_{+1}) \cap Q_{+1}$, whereas the $X_{B}$-orbits of $I$-points of $\mathcal{C}$ of length $q-1$, which are a union of two $T_{B}$-orbits each of length $\frac{q-1}{2}$, are the $(q-1)$-arcs $\mathcal{C}_{h}^{\ast}$ with $h \in (1+Q_{+1})\cap Q_{-1}$ since $X_{B}=\left\langle \alpha, \beta \right\rangle$ $\hfill\square$.
\end{proof}

\begin{corollary}
\label{Elle}The following hold:
\begin{enumerate} 
    \item  If $q\equiv -1 \pmod{4}$, then $\left\vert (1+Q_{+1}) \cap Q_{+1} \right\vert= \frac{q-3}{4}$ and $\left\vert (1+Q_{+1}) \cap Q_{-1} \right\vert= \frac{q+1}{4}$;
    \item If $q\equiv 1 \pmod{4}$, then $\left\vert (1+Q_{-1}) \cap Q_{-1} \right\vert= \left\vert (1+Q_{-1}) \cap Q_{+1}  \right\vert=\frac{q-1}{4}$.
\end{enumerate}
\end{corollary}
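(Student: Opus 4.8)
The plan is to count residues using the quadratic character $\chi$ and the standard Jacobi-sum/Gauss-sum style identity $\sum_{z\in GF(q)}\chi(z)\chi(z+1)=-1$, which holds for any odd prime power $q$. Concretely, I would like to compute $N:=\left\vert (1+Q_{\varepsilon})\cap Q_{\eta}\right\vert$ for each choice of signs $\varepsilon,\eta\in\{\pm1\}$, and then specialise to the two cases in the statement according to whether $-1$ is a square, i.e. whether $q\equiv 1$ or $q\equiv -1\pmod 4$.

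First I would set up the character sum. For $z\in GF(q)^{\ast}$ with $z+1\neq 0$, the quantity $\tfrac14\bigl(1+\varepsilon\chi(z)\bigr)\bigl(1+\eta\chi(z+1)\bigr)$ is the indicator of the event $z\in Q_{\varepsilon}$ and $z+1\in Q_{\eta}$. Summing over all $z\in GF(q)\setminus\{0,-1\}$ gives
\[
N=\frac14\sum_{z\neq 0,-1}\Bigl(1+\varepsilon\chi(z)+\eta\chi(z+1)+\varepsilon\eta\,\chi(z)\chi(z+1)\Bigr).
\]
The first term contributes $\tfrac14(q-2)$. For the second term, $\sum_{z\neq 0,-1}\chi(z)=\sum_{z\in GF(q)}\chi(z)-\chi(-1)=-\chi(-1)$, and similarly the third term contributes $-\eta\chi(-1)\cdot(-1)$ — more carefully, $\sum_{z\neq 0,-1}\chi(z+1)=\sum_{w\in GF(q)}\chi(w)-\chi(1)=-1$. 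For the last term, $\sum_{z\neq 0,-1}\chi(z)\chi(z+1)=\sum_{z\in GF(q)}\chi(z(z+1))=\sum_{z}\chi(z)\chi(z+1)=-1$ by the classical evaluation (substitute $z=uw$ with $w$ ranging appropriately, or cite the standard Jacobi sum identity). Assembling, $N=\tfrac14\bigl((q-2)-\varepsilon\chi(-1)-\eta-\varepsilon\eta\bigr)$.

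Then I would just plug in the signs. When $q\equiv -1\pmod 4$ we have $\chi(-1)=-1$, so $(1+Q_{+1})\cap Q_{+1}$ corresponds to $\varepsilon=\eta=+1$, giving $N=\tfrac14\bigl((q-2)+1-1-1\bigr)=\tfrac{q-3}{4}$, and $(1+Q_{+1})\cap Q_{-1}$ corresponds to $\varepsilon=+1,\eta=-1$, giving $N=\tfrac14\bigl((q-2)+1+1+1\bigr)=\tfrac{q+1}{4}$, which is part (i). When $q\equiv 1\pmod 4$ we have $\chi(-1)=1$; then $\varepsilon=-1,\eta=-1$ gives $N=\tfrac14\bigl((q-2)+1+1-1\bigr)=\tfrac{q-1}{4}$ and $\varepsilon=-1,\eta=+1$ gives $N=\tfrac14\bigl((q-2)+1-1+1\bigr)=\tfrac{q-1}{4}$, which is part (ii). As a sanity check, the four values for a fixed parity of $q$ should sum to $q-2$ (the number of admissible $z$), and indeed $\tfrac{q-3}{4}+\tfrac{q+1}{4}+\tfrac{q+1}{4}+\tfrac{q-3}{4}=q-1$ — wait, that is $q-1$ not $q-2$; the discrepancy of $1$ is exactly the point $z=1$, which when $q\equiv-1\pmod4$ lies in $1+Q_{+1}$ vacuously? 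Here I would double-check the boundary cases $z=1$ (is $2\in Q_\pm$?) and the treatment of whether $1\in 1+Q_{\pm1}$, i.e. whether $0$ counts — since $Q_{\pm1}\subseteq GF(q)^\ast$, the value $z$ with $z-1=0$ is excluded, matching the sum over $z\neq 0$. The only genuinely delicate point, and the one I would be most careful about, is getting these endpoint conventions consistent with how $Q_{+1},Q_{-1}$ and the sets $(1+Q_{\pm1})$ were defined in the paper (in particular that $0\notin 1+Q_{\pm1}$), since an off-by-one there changes $\tfrac{q\pm1}{4}$ by $1$; everything else is the routine character-sum computation above.
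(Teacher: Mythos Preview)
Your character-sum approach is correct and complete: the formula $N=\tfrac14\bigl((q-2)-\varepsilon\chi(-1)-\eta-\varepsilon\eta\bigr)$ is right, and specialising to the four sign pairs gives exactly the claimed values. The confusion in your sanity check is only a slip in listing the \emph{other} two values of $N$: for $q\equiv -1\pmod4$ the four counts for $(\varepsilon,\eta)=(+,+),(+,-),(-,+),(-,-)$ are $\tfrac{q-3}{4},\tfrac{q+1}{4},\tfrac{q-3}{4},\tfrac{q-3}{4}$, which do sum to $q-2$. There is no boundary problem; your exclusion of $z=0$ and $z=-1$ already matches the definition $Q_{\pm1}\subseteq GF(q)^{\ast}$ precisely.

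The paper takes a genuinely different route. For $q\equiv -1\pmod4$ it recognises $\left\vert (1+Q_{+1})\cap Q_{+1}\right\vert$ as the intersection number of two blocks in the development of the Paley--Hadamard $\bigl(q,\tfrac{q-1}{2},\tfrac{q-3}{4}\bigr)$ difference set, and obtains the second count by complementing inside $1+Q_{+1}$. For $q\equiv 1\pmod4$ it reads $\left\vert (1+Q_{+1})\cap Q_{+1}\right\vert=\tfrac{q-5}{4}$ off the Paley graph (as the number of common neighbours of $0$ and $1$), then reaches the two required counts by a short chain of complements. Your argument is more self-contained and treats all sign choices uniformly in one stroke; the paper's argument is quicker to write if one is willing to cite the Paley difference set and Paley graph parameters from \cite{BJL}, since the answers are then read off directly from those classical objects. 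Both proofs ultimately rest on the same quadratic-residue arithmetic, just packaged differently.
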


\begin{proof}
 If $q\equiv -1 \pmod{4}$, then $\left\vert (1+Q_{+1}) \cap Q_{+1} \right\vert= \frac{q-3}{4}$ since $(1+Q_{+1}) \cap Q_{+1}$ is the number of common points of $1+Q_{+1}$ and $Q_{+1}$ regarded as blocks of the development of the Paley-Hadamard $\left(q,\frac{q-1}{2}, \frac{q-3}{4}\right)$-difference set by \cite[Theorem VI.1.12]{BJL}. Consequently, one has $$\left\vert (1+Q_{+1}) \cap Q_{-1} \right\vert=\frac{q-1}{2}- \frac{q-3}{4}=\frac{q+1}{4} \text{.}$$ 

If $q\equiv 1 \pmod{4}$, then $\left\vert \left( 1+Q_{+1}\right)
\cap Q_{+1}\right\vert =\frac{q-5}{4}$ since it is the number of common
neighbours of $0$ and $1$ in the Paley graph by \cite[Example III.10.15]{BJL}. Then 
$$
\left\vert \left( 1+Q_{-1}\right) \cap Q_{+1}
\right\vert=\left\vert Q_{+1}\setminus \left\{ 1\right\} \right\vert-\left\vert \left( 1+Q_{+1}\right) \cap
Q_{+1}\right\vert=\frac{q-3}{2}-\frac{q-5}{4}=\frac{q-1}{4}\text{,}
$$
and hence
$$
\left\vert \left( 1+Q_{-1}\right) \cap Q_{-1} \right\vert=\frac{q-1}{2}-\left\vert \left( 1+Q_{-}\right) \cap Q_{+}
\right\vert=\frac{q-1}{2}-\frac{q-1}{4}=\frac{q-1}{4}\text{.}
$$
 $\hfill\square$
\end{proof}

\begin{lemma}\label{BOO}Let $B$ be any block of $\mathcal{D}$. Then one of the following holds:
\begin{enumerate}
    \item[(1)] $q\equiv -1 \pmod{4}$ and $B=\mathcal{C}_{h}^{\ast}$ for some $h \in 1+Q_{+1}$;
     \item[(2)] $q\equiv 1 \pmod{4}$ and one of the following holds:
     \begin{enumerate}
         \item $B=\mathcal{C}_{h}^{\ast}$ for some $h \in 1+Q_{-1}$;
         \item $B=\mathcal{O}_{h,i}\cup \mathcal{O}_{h^{p^{m}},j}$ for some $h\in  (1+Q_{-1})\cap Q_{+1}$, $h^{p^{m}}\neq h$ and $h^{p^{2m}}=h$, $1 \leq m \leq \log_{p}(q)/2$ and $i,j\in \left\{ -1,+1\right\} $. In particular, $\log_{p}(q)$ is even.
     \end{enumerate}    
\end{enumerate}
\end{lemma}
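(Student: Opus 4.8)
By flag-transitivity $G_{B}$ acts transitively on the $q-1$ points of $B$, so $B$ is a single $G_{B}$-orbit, and $T_{B}=T\cap G_{B}$ is normal in $G_{B}$. We may assume $T_{B}=\langle\alpha^{2},\beta\rangle\cong D_{q-1}$ as in the setup preceding Proposition~\ref{orbit}. By that proposition every $T_{B}$-orbit on $\mathcal{P}=I$ has length $q-1$ or $\frac{q-1}{2}$, so, as $|B|=q-1$, either $B$ is a single $T_{B}$-orbit of length $q-1$, or $B=\Omega_{1}\cup\Omega_{2}$ is the union of exactly two $T_{B}$-orbits of length $\frac{q-1}{2}$.

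Assume first $q\equiv-1\pmod 4$. By Lemma~\ref{mom}, either $T$ is flag-transitive, whence $B$ is a single $T_{B}$-orbit of length $q-1$, hence $B=\mathcal{C}_{h}^{\ast}$ with $h\in(1+Q_{+1})\cap Q_{+1}$ by Proposition~\ref{orbit}(1)(a); or $X$ is flag-transitive and $B$ is an $X_{B}$-orbit of length $q-1$ which is a union of two $T_{B}$-orbits of length $\frac{q-1}{2}$, hence $B=\mathcal{C}_{h}^{\ast}$ with $h\in(1+Q_{+1})\cap Q_{-1}$ by Proposition~\ref{orbit}(1)(b). In either case $h\in 1+Q_{+1}$, which is conclusion~(1).

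Now assume $q\equiv 1\pmod 4$. If $B$ is a single $T_{B}$-orbit of length $q-1$, then $B=\mathcal{C}_{h}^{\ast}$ with $h\in(1+Q_{-1})\cap Q_{-1}\subseteq 1+Q_{-1}$ by Proposition~\ref{orbit}(2)(a), which is conclusion~(2)(a). Otherwise $B=\Omega_{1}\cup\Omega_{2}$, where by Proposition~\ref{orbit}(2)(b) the two distinct $T_{B}$-orbits $\Omega_{1},\Omega_{2}$ of length $\frac{q-1}{2}$ lie among $\mathcal{O}_{\infty}$ and the orbits $\mathcal{O}_{h,\pm1}$ with $h\in(1+Q_{-1})\cap Q_{+1}$. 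For this case my plan is as follows. Since $G_{B}$ is transitive on $B$ while $T_{B}\trianglelefteq G_{B}$ has precisely the two orbits $\Omega_{1},\Omega_{2}$ on $B$, some $g\in G_{B}$ must interchange $\Omega_{1}$ and $\Omega_{2}$. Next I would locate $g$ inside the normaliser: from $G_{B}\leq N_{G}(T_{B})\leq N_{\PGGL(2,q)}(T_{B})$ and the fact that the (characteristic) cyclic subgroup $\langle\alpha^{2}\rangle$ of $T_{B}$ has $\{O,P_{\infty}\}\subseteq\mathcal{C}$ as its unique invariant pair of points (here $q>5$), every element normalising $T_{B}$ fixes $\{O,P_{\infty}\}$; hence $N_{\PGGL(2,q)}(T_{B})\leq\langle\alpha,\beta\rangle\rtimes\langle\phi\rangle$, where $\langle\alpha,\beta\rangle\cong D_{2(q-1)}$ and $\phi$ is the Frobenius collineation $(x_{0},x_{1},x_{2})\mapsto(x_{0}^{p},x_{1}^{p},x_{2}^{p})$, so that $G_{B}/T_{B}$ embeds into $\langle\overline{\alpha}\rangle\times\langle\overline{\phi}\rangle\cong Z_{2}\times Z_{f}$, $f=\log_{p}(q)$. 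I would then read off the induced action on the arcs $\mathcal{O}_{\infty},\mathcal{O}_{h,\pm1}$: as $\omega$ is a non-square, $\overline{\alpha}$ fixes $\mathcal{O}_{\infty}$ and interchanges $\mathcal{O}_{h,+1}$ and $\mathcal{O}_{h,-1}$, while, since $\phi$ carries $\mathcal{C}_{h}$ to $\mathcal{C}_{h^{p}}$ and preserves the quadratic character, $\overline{\phi}$ fixes $\mathcal{O}_{\infty}$ and carries $\mathcal{O}_{h,i}$ to $\mathcal{O}_{h^{p},i}$. In particular every element of $G_{B}/T_{B}$ fixes $\mathcal{O}_{\infty}$ setwise, so $\mathcal{O}_{\infty}\notin\{\Omega_{1},\Omega_{2}\}$ and we may write $\Omega_{1}=\mathcal{O}_{h,i}$, $\Omega_{2}=\mathcal{O}_{h',j}$; writing the image of $g$ in $G_{B}/T_{B}$ as $\overline{\alpha}^{a}\overline{\phi}^{b}$, we get $\mathcal{O}_{h^{p^{b}},(-1)^{a}i}=\mathcal{O}_{h',j}$, that is $h'=h^{p^{b}}$. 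If $h'=h$ then $B=\mathcal{C}_{h}^{\ast}$ with $h\in(1+Q_{-1})\cap Q_{+1}\subseteq 1+Q_{-1}$, again conclusion~(2)(a); if $h'\neq h$, then $g^{2}\in G_{B}$ fixes $\Omega_{1}$ setwise and, since $\overline{\alpha}^{2}=1$, acts as $\overline{\phi}^{2b}$, so $h^{p^{2b}}=h\neq h^{p^{b}}$, and with $d$ the degree of $h$ over the prime field the relations $d\mid 2b$, $d\nmid b$ force $d$ to be even with $b\equiv d/2\pmod d$; then $m:=d/2$ satisfies $h'=h^{p^{m}}$, $h^{p^{m}}\neq h$, $h^{p^{2m}}=h$, $1\leq m\leq f/2$, and $f=\log_{p}(q)$ is even, which is conclusion~(2)(b).

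The crux is this last sub-case: for $q\equiv1\pmod4$ there is no analogue of Lemma~\ref{mom}, so one cannot simply declare a block to be a single orbit of $T$ or of $X$, and one must control the whole stabiliser $G_{B}$, including any field automorphisms it contains — which is why pinning down $N_{\PGGL(2,q)}(T_{B})$ and its action on the pencil $\{\mathcal{C}_{h}\}$ of conics bitangent to $\mathcal{C}$ at $O,P_{\infty}$ is the essential point. Once that is done, converting the transitivity of $G_{B}$ on $B$ into the arithmetic relations among $h,h'$ and $m$ is elementary; the remaining items — the exact shape of the normaliser, the precise actions of $\overline{\alpha},\overline{\phi}$ on the orbits $\mathcal{O}_{h,i}$, and the short $2$-adic valuation argument producing $m\le f/2$ — are routine.
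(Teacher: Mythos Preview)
Your argument is correct and follows essentially the same route as the paper: use Lemma~\ref{mom} and Proposition~\ref{orbit} for $q\equiv-1\pmod 4$, and for $q\equiv 1\pmod 4$ locate $G_{B}$ inside $N_{P\Gamma L(2,q)}(T_{B})=\langle\alpha,\beta,\phi\rangle$ and read off its action on the $T_{B}$-orbits $\mathcal{O}_{h,\pm1}$. The only noteworthy difference is how $\mathcal{O}_{\infty}$ is excluded: the paper observes that $\mathcal{O}_{\infty}$ is a set of $\tfrac{q-1}{2}>2$ collinear points while each $\mathcal{O}_{h,j}$ lies on an arc, and collineations cannot carry one to the other; you instead note that $\mathcal{O}_{\infty}$ is invariant under all of $\langle\alpha,\beta,\phi\rangle$, hence cannot be swapped with another orbit. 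Your normalisation $m=d/2$ (with $d$ the degree of $h$) is also a slight sharpening, making the bound $m\le f/2$ explicit where the paper leaves this implicit.
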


\begin{proof}
Let $B$ be any block of $\mathcal{D}$. Assume that $q\equiv -1 \pmod{4}$. By Lemma \ref{mom}, either $T$ acts flag-transitively on $\mathcal{D}$, $B$ is a $T_{B}$-orbit of length $q-1$, and hence $B=\mathcal{C}_{h}^{\ast}$ for some $h \in (1+Q_{+1})\cap Q_{+1}$ by Proposition \ref{orbit}(i.a), or $T$ does not act flag-transitively on $\mathcal{D}$, $X$ acts flag-transitively on $\mathcal{D}$, $B$ is a $X_{B}$-orbit of length $q-1$, and hence $B=\mathcal{C}_{h}^{\ast}$ with $h \in (1+Q_{+1})\cap Q_{-1}$ by Proposition \ref{orbit}(i.b). This proves (1).

Assume that $q\equiv 1 \pmod{4}$. If $T$ acts flag-transitively on $\mathcal{D}$, then $B$ is a $T_{B}$-orbit of length $q-1$, and hence $B=\mathcal{C}_{h}^{\ast}$ for some $h \in (1+Q_{-1})\cap Q_{-1}$ by Proposition \ref{orbit}(ii.a), and we obtain (2.a) in this case.

If $T$ does acts flag-transitively on $\mathcal{D}$, then $B$ is a union of two $B$-orbits of length $\frac{q-1}{2}$. If $\mathcal{O}_{\infty }\subset B$. Then $B=\mathcal{O}_{\infty
}\cup \mathcal{O}_{h_{0},j}$ for some $h_{0}\in (1+Q_{-1})\cap Q_{+1}$ and $j\in \left\{
-1,+1\right\} $ by Lemma \ref{orbit}(ii.b). Note that, $\frac{q-1}{2}>2$ since $q>5$.
Now, $G_{B}$ acts transitively on $B$ and $G_{B}\leq P\Gamma L(2,q)$. On
the other hand $G<P\Gamma L(3,q)$, hence the elements of $G_{B}$ are
collineations of $PG(2,q)$ and these do not map subsets of lines of size $%
\frac{q-1}{2}>2$ onto subsets of conics, being these ones arcs. Thus, $\mathcal{O}_{\infty
}\not\subset B$. Therefore $B=\mathcal{O}_{h_{1},i}\cup \mathcal{O}_{h_{2},j}$
for some $h_{1},h_{2}\in  (1+Q_{-1})\cap Q_{+1}$ and $i,j\in \left\{-1,+1\right\} $.

Assume that $h_{1}=h_{2}$. Then $i\neq j$ since $k=q-1$, and hence $B=%
\mathcal{C}_{h_{1}}^{\ast }$ with $h_{1}\in  (1+Q_{-1})\cap Q_{+1}$, and we again obtain (2.a).

Assume that $h_{1} \neq h_{2}$. The group $T$ has one conjugacy class of subgroups isomorphic to $T_{B}$,
hence $G_{B}=N_{G}(T_{B})\leq N_{P\Gamma L(2,q)}(T_{B})=\left\langle \beta
,\alpha ,\sigma \right\rangle $, where $\sigma :\left(
X_{0},X_{1},X_{2}\right) \longmapsto \left(
X_{0}^{p},X_{1}^{p},X_{2}^{p}\right) $. Now, $\mathcal{O}_{h_{s},i}^{\alpha^
t}=\mathcal{O}_{h_{s},j}$ with $j\neq i$ if $t$ is odd, and $\mathcal{O}%
_{h_{s},i}^{\alpha ^{t}}=\mathcal{O}_{h_{s},j}$ with $j=i$ if $t$ is even. Further, $\mathcal{O}_{h_{s},i}^{\sigma ^{m}}=\mathcal{O}%
_{h_{s}^{p^{m}},i}$. Hence, $\mathcal{O}_{h_{1},i}^{\beta ^{w}\alpha
^{t}\sigma ^{m}}=\mathcal{O}_{h_{1},i}$ if and only if $t$ is even and $%
h_{1}\in GF(p^d)$, where $d=\gcd (f,m)$  and $q=p^f$. Furthermore, we have
\[
\mathcal{O}_{h_{1},i}^{\beta ^{w}\alpha ^{t}\sigma ^{m}}=\mathcal{O}%
_{h_{2},j}\Longleftrightarrow \left\{ 
\begin{array}{cc}
t\text{ is odd and }h_{2}=h_{1}^{p^{m}}\text{ }, & i\neq j, \\ 
t\text{ is even and }h_{2}=h_{1}^{p^{m}}, & i=j.%
\end{array}%
\right. 
\]%
Suppose that $i\neq j$. Then there is $\beta ^{w}\alpha
^{t}\sigma ^{m}\in G_{B}$ such that $\mathcal{O}_{h_{1},i}^{\beta ^{w}\alpha
^{t}\sigma ^{m}}=\mathcal{O}_{h_{2},j}$ and $\mathcal{O}_{h_{2},j}^{\beta
^{w}\alpha ^{t}\sigma ^{m}}=\mathcal{O}_{h_{1},i}$. Hence, $t$ is odd, $%
h_{2}=h_{1}^{p^{m}}$ and $h_{1}=h_{2}^{p^{m}}$. Therefore, $f=\log_{p}(q)$ is even, $%
h_{s}^{p^{2m}}=h_{s}$ for $s=1,2$.\\
Suppose that $i=j$. Then there is $\beta
^{w}\alpha ^{t}\sigma ^{m}\in G_{B}$ such that $\mathcal{O}_{h_{1},i}^{\beta
^{w}\alpha ^{t}\sigma ^{m}}=\mathcal{O}_{h_{2},i}$ and $\mathcal{O}%
_{h_{2},j}^{\beta ^{w}\alpha ^{t}\sigma ^{m}}=\mathcal{O}_{h_{1},i}$. Hence, 
$t$ is even, $h_{2}=h_{1}^{p^{m}}$ and $h_{1}=h_{2}^{p^{m}}$. Therefore, $f=\log_{p}(q)$ is even, $%
h_{s}^{p^{2m}}=h_{s}$ for $s=1,2$ also in this case. Thus, we obtain (2.b). $\hfill\square$
\end{proof}

\bigskip

Let $c,\xi \in GF(q)$ with $\xi $ fixed, $\xi \neq 0$, and $\xi ^{2}\neq -1$ when $q \equiv 1 \pmod{4}$,
and let $\tau_{\xi} $, $\gamma _{c}$ be the elements of $T$ represented, up to a non-zero element of $GF(q)$, by the
matrices%
\begin{equation}\label{rep1}
\left( 
\begin{array}{ccc}
1 & \xi & \xi ^{2} \\ 
2\xi & -1+\xi ^{2} & -2\xi \\ 
\xi ^{2} & -\xi & 1%
\end{array}%
\right) \text{ and } \left( 
\begin{array}{ccc}
1 & c & c^{2} \\ 
0 & 1 & 2c \\ 
0 & 0 & 1%
\end{array}%
\right) 
\end{equation}%
respectively. Then $W=\left\{ \gamma _{c}:c\in GF(q)\right\} $ is the Sylow $%
p$-subgroup of $T$ fixing $O=(0,0,1)$. Then $\tau _{\xi }$ has order $2$,
and $\tau _{\xi }\gamma _{c}\alpha ^{2u}$ is represented by the matrix
\begin{equation}\label{rep2}
\left( 
\begin{array}{ccc}
\omega ^{2u} & c+\xi & \frac{1}{\omega ^{2u}}\left( c+\xi \right) ^{2} \\ 
2\xi \omega ^{2u} & \xi ^{2}+2c\xi -1 & \frac{2}{\omega ^{2u}}\left( c\xi
-1\right) \left( c+\xi \right) \\ 
\xi ^{2}\omega ^{2u} & \xi \left( c\xi -1\right) & \frac{1}{\omega ^{2u}}%
\left( c\xi -1\right) ^{2}%
\end{array}%
\right) \text{.} 
\end{equation}

\medskip

\begin{lemma}
\label{LSR1}The following hold:

\begin{enumerate}
\item If $T_{B}\gamma _{c_{1}}=T_{B}\gamma _{c_{2}}$, then $c_{1}= c_{2}$;

\item $T_{B}\gamma _{c_{1}} \neq T_{B}\tau _{\xi }\gamma _{c_{2}}\alpha ^{2u}$;

\item If $T_{B}\tau _{\xi }\gamma _{c_{1}}\alpha ^{2u_{1}}=T_{B}\tau _{\xi
}\gamma _{c_{2}}\alpha ^{2u_{2}}$ and $(c_{2},u_{2})\neq (c_{1},u_{1})$, then $q \equiv 1 \pmod{4}$ and 
$(c_{2},u_{2})=\left( \xi ^{-1}-\xi -c_{1},u_{1}+\frac{q-1}{4}\right) $.

\item If $q \equiv 1 \pmod{4}$, $T_{B}\tau _{\xi _{1}}\gamma _{c_{1}}\alpha ^{2u_{1}}=T_{B}\tau
_{\xi _{2}}\gamma _{c_{2}}\alpha ^{2u_{2}}$ with $\xi_{1} \neq \xi_{2}$, then $\chi (\xi _{1}\xi _{2})=1$.
\end{enumerate}
\end{lemma}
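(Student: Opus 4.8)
All four statements assert that if two right cosets $T_{B}g_{1}$ and $T_{B}g_{2}$ coincide --- equivalently $g_{1}g_{2}^{-1}\in T_{B}$ --- then the parameters of $g_{1}$ and $g_{2}$ are constrained in a prescribed way. The plan is to base every part on one structural fact. Since $\langle\alpha^{2}\rangle$ consists of diagonal matrices and so fixes both $O$ and $P_{\infty}$, while $\beta$ interchanges $O$ and $P_{\infty}$, and since $|T_{B}|=q-1=2\cdot\frac{q-1}{2}$ whereas the pointwise stabiliser of $\{O,P_{\infty}\}$ in $T$ is a torus of order $\frac{q-1}{2}$, the group $T_{B}=\langle\alpha^{2},\beta\rangle$ is exactly the setwise stabiliser $\Stab_{T}(\{O,P_{\infty}\})$, with $\langle\alpha^{2}\rangle$ its pointwise stabiliser and $\langle\alpha^{2}\rangle\beta$ the set of elements of $T$ swapping $O$ and $P_{\infty}$. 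Hence for $g_{1},g_{2}\in T$ one has $g_{1}g_{2}^{-1}\in T_{B}$ if and only if the unordered pairs $\{O^{g_{1}},P_{\infty}^{g_{1}}\}$ and $\{O^{g_{2}},P_{\infty}^{g_{2}}\}$ coincide. The concrete input is read off \eqref{rep2}: for $g=\tau_{\xi}\gamma_{c}\alpha^{2u}$ the points $P_{\infty}^{g}$ and $O^{g}$ are the first and third rows of that matrix, and after normalising the leading coordinate to $1$ they are $(1,a_{g},a_{g}^{2})$ and $(1,b_{g},b_{g}^{2})$ with $a_{g}=(c+\xi)\omega^{-2u}$ and $b_{g}=(c\xi-1)(\xi\omega^{2u})^{-1}$; here $\xi\neq0$ makes this meaningful and keeps both points distinct from $O=(0,0,1)$, while $\xi^{2}+1\neq0$ (automatic if $q\equiv-1\pmod4$, hypothesised if $q\equiv1\pmod4$) gives $a_{g}\neq b_{g}$. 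Similarly $\gamma_{c}$ fixes $O$ and sends $P_{\infty}$ to $(1,c,c^{2})$.

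Parts (1) and (2) then fall out at once. For (1), $\gamma_{c_{1}}\gamma_{c_{2}}^{-1}=\gamma_{c_{1}-c_{2}}$ lies in $W\cap T_{B}$, which is trivial because $\gcd(|W|,|T_{B}|)=\gcd(q,q-1)=1$; hence $c_{1}=c_{2}$. For (2), if the cosets coincided then $\{O,(1,c_{1},c_{1}^{2})\}$ would equal $\{(1,a,a^{2}),(1,b,b^{2})\}$ with $a=a_{g},b=b_{g}$ for $g=\tau_{\xi}\gamma_{c_{2}}\alpha^{2u}$; but the first set contains $O=(0,0,1)$ and the second consists of points with nonzero leading coordinate, a contradiction.

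For (3) and (4) write $g_{i}=\tau_{\xi_{i}}\gamma_{c_{i}}\alpha^{2u_{i}}$ (with $\xi_{1}=\xi_{2}$ in (3)). If $T_{B}g_{1}=T_{B}g_{2}$ then $\{a_{g_{1}},b_{g_{1}}\}=\{a_{g_{2}},b_{g_{2}}\}$, so exactly one of the matchings (A) $a_{g_{1}}=a_{g_{2}},\,b_{g_{1}}=b_{g_{2}}$ or (B) $a_{g_{1}}=b_{g_{2}},\,b_{g_{1}}=a_{g_{2}}$ holds. In either case this is a pair of scalar equations that are linear in $c_{2}$ and in $w:=\omega^{2(u_{2}-u_{1})}$; eliminating $c_{2}$ leaves a closed formula for $w$ depending on $\xi_{1},\xi_{2}$ alone, which I expect to be $w=\xi_{1}(\xi_{2}^{2}+1)\bigl(\xi_{2}(\xi_{1}^{2}+1)\bigr)^{-1}$ in case (A) and $w=-\xi_{1}(\xi_{2}^{2}+1)\bigl(\xi_{2}(\xi_{1}^{2}+1)\bigr)^{-1}$ in case (B), together with a formula for $c_{2}$. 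For (3), $\xi_{1}=\xi_{2}=\xi$: case (A) yields $w=1$ and $c_{2}=c_{1}$, i.e. $g_{1}=g_{2}$, the excluded trivial situation; case (B) yields $w=-1$ and $c_{2}=\xi^{-1}-\xi-c_{1}$, and since $w=\omega^{2(u_{2}-u_{1})}$ the equation $\omega^{2(u_{2}-u_{1})}=-1=\omega^{(q-1)/2}$ has a solution precisely when $(q-1)/2$ is even, i.e. when $q\equiv1\pmod4$, whereupon $u_{2}-u_{1}\equiv\frac{q-1}{4}\pmod{\frac{q-1}{2}}$ --- which is the assertion. For (4), $q\equiv1\pmod4$ and $\xi_{1}\neq\xi_{2}$: in either case $w=\pm\xi_{1}(\xi_{2}^{2}+1)\bigl(\xi_{2}(\xi_{1}^{2}+1)\bigr)^{-1}$ must be a nonzero square, so $\chi(\pm1)\,\chi(\xi_{1}\xi_{2})\,\chi(\xi_{1}^{2}+1)\,\chi(\xi_{2}^{2}+1)=1$; but $\chi(-1)=1$ as $q\equiv1\pmod4$, and $\chi(\xi_{i}^{2}+1)=1$ because $\tau_{\xi_{i}}\in T$ forces the determinant $-(1+\xi_{i}^{2})$ of the $2\times2$ matrix representing $\tau_{\xi_{i}}$ to be a nonzero square (using $\chi(-1)=1$ once more). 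Therefore $\chi(\xi_{1}\xi_{2})=1$.

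The main nuisance, and the step I expect to demand the most care, is the separate handling of the degenerate configurations in which some $c_{i}\xi_{i}-1$ or $c_{i}+\xi_{i}$ vanishes, so that an image point specialises to $(1,0,0)=P_{\infty}$ and the division steps above are not literally valid; in every such case the coinciding-pairs criterion still applies verbatim and yields nothing new, but this must be checked. The rest is routine linear algebra in $c_{2}$ and elementary bookkeeping with the quadratic character, the one point to keep in sight being that $\chi(\xi^{2}+1)=1$ throughout (because $\tau_{\xi}\in T$). Finally, the same computations show conversely that each displayed relation on the parameters does force $T_{B}g_{1}=T_{B}g_{2}$, which is what one will need when counting the blocks of $\mathcal{D}$.
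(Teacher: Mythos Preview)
Your proof is correct and takes a genuinely different route from the paper's. The paper proceeds by direct $3\times 3$ matrix computation: for (iii), for instance, it writes $\gamma_{c_{1}}\alpha^{2(u_{1}-u_{2})}\gamma_{c_{2}}^{-1}=\tau_{\xi}\beta^{m}\alpha^{2s}\tau_{\xi}$, expands both sides explicitly (producing the rather unwieldy matrices $N_{0}$ and $N_{1}$), and equates entries; for (iv) it computes $O^{\tau_{\xi_{1}}\beta^{m}\alpha^{2s}\tau_{\xi_{2}}}$ in coordinates. Your approach replaces all of this by the single structural observation that $T_{B}$ is the setwise stabiliser in $T$ of $\{O,P_{\infty}\}$, so that $T_{B}g_{1}=T_{B}g_{2}$ is equivalent to $\{O^{g_{1}},P_{\infty}^{g_{1}}\}=\{O^{g_{2}},P_{\infty}^{g_{2}}\}$. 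Since these image points lie on $\mathcal{C}$ and are therefore determined by a single affine parameter ($a_{g}$ and $b_{g}$ in your notation), the whole problem collapses to equalities of unordered pairs of scalars, and the resulting equations are linear in $c_{2}$ and $w=\omega^{2(u_{2}-u_{1})}$. This is substantially cleaner than the paper's brute-force computation and makes the appearance of the quantity $\xi_{1}(\xi_{2}^{2}+1)/\xi_{2}(\xi_{1}^{2}+1)$ in (iv) transparent rather than miraculous. The one ingredient you use that the paper does not make explicit is that $\tau_{\xi}\in T$ forces $\chi(-(1+\xi^{2}))=1$, hence $\chi(1+\xi^{2})=1$ when $q\equiv 1\pmod 4$; this is correct, being exactly the determinant condition in the paper's parametrisation of $T$.

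Two small remarks. First, your worry about ``degenerate configurations'' is unnecessary: since $\xi\neq 0$ and $\omega^{2u}\neq 0$, the parameters $a_{g}$ and $b_{g}$ are always well-defined elements of $GF(q)$ (the leading coordinate of each image point is never zero), and the value $0$ --- corresponding to the image being $P_{\infty}$ --- causes no trouble in the linear equations you solve. No separate case analysis is needed. Second, the converse you mention at the end (that the displayed relation in (iii) really does give $T_{B}g_{1}=T_{B}g_{2}$) is indeed used later in the paper, in the proof of Theorem~\ref{main}, and your framework gives it for free: once $\{a_{g_{1}},b_{g_{1}}\}=\{a_{g_{2}},b_{g_{2}}\}$ has been verified, coset equality follows from the stabiliser characterisation.
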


\begin{proof}
If $T_{B}\gamma _{c_{1}}=T_{B}\gamma _{c_{2}}$, then $\gamma
_{c_{1}-c_{2}}\in T_{B}$ and hence $\gamma _{c_{1}-c_{2}}=\beta ^{m}\alpha
^{2s}$ for some integers $m$ and $s$. Thus $\gamma_{c_{1}-c_{2}}\alpha
^{-2s}=\beta ^{m}$. Since $\gamma_{c_{1}-c_{2}}\alpha
^{-2s}$ fixes $O$,
whereas $\beta ^{m}$ does not, unless $\beta ^{m}=1$, it follows that $\beta
^{m}=1$, $\alpha ^{-2s}\gamma_{c_{1}-c_{2}}=1$, and hence $\alpha ^{2s}=1$
and $c_{1}=c_{2}$. This proves (i).

Assume that $T_{B}\gamma _{c_{1}}=T_{B}\tau_{\xi} \gamma _{c_{2}}\alpha ^{2u}$.
Then $\gamma _{c_{2}}\alpha ^{2u}\gamma _{c_{1}}^{-1}=\tau_{\xi} \beta ^{m}\alpha
^{2s}$. Then $\gamma _{c_{2}}\alpha ^{2u}\gamma _{c_{1}}^{-1}$ fixes $O$,
whereas $O^{\tau_{\xi} \beta ^{m}\alpha ^{2s}}$ is $(\xi ^{2}\omega ^{4s},-\xi \omega ^{2s},1)$ or $(\xi
^{-2}\omega ^{4s},\xi ^{-1} \omega ^{2s},1)$ according as $m=0$ or $1$, respectively, and we obtain $%
T_{B}\gamma _{c_{1}}\neq T_{B}\tau_{\xi} \gamma _{c_{2}}\alpha ^{2u}$. This proves (ii).

Assume that $T_{B}\tau_{\xi} \gamma _{c_{1}}\alpha ^{2u_{1}}=T_{B}\tau_{\xi} \gamma
_{c_{2}}\alpha ^{2u_{2}}$. Then $\gamma _{c_{1}}\alpha
^{2(u_{1}-u_{2})}\gamma _{c_{2}}^{-1}=\tau_{\xi} \beta ^{m}\alpha ^{2s}\tau_{\xi} $.
Then $\gamma _{c_{1}}\alpha ^{2(u_{1}-u_{2})}\gamma _{c_{2}}^{-1}$ is
represented by 
\[
M=\left( 
\begin{array}{ccc}
\omega ^{2u_{1}-2u_{2}} & c_{1}-\omega ^{2u_{1}-2u_{2}}c_{2} & \omega
^{2u_{2}-2u_{1}}\left( c_{1}-\omega ^{2u_{1}-2u_{2}}c_{2}\right) ^{2} \\ 
0 & 1 & 2\omega ^{2u_{2}-2u_{1}}\left( c_{1}-\omega
^{2u_{1}-2u_{2}}c_{2}\right) \\ 
0 & 0 & \omega ^{2u_{2}-2u_{1}}%
\end{array}%
\right) . 
\]%
If $m=0$, then $\tau_{\xi} \alpha ^{2s}\tau_{\xi} $ is represented by%
\scriptsize
\[
N_{0}\allowbreak =\left( 
\begin{array}{ccc}
\frac{1}{\omega ^{2s}}\left( \xi ^{2}+\omega ^{2s}\right) ^{2} & \frac{\xi }{%
\omega ^{2s}}\left( \omega ^{2s}-1\right) \left( \xi ^{2}+\omega ^{2s}\right)
& \frac{\xi ^{2}}{\omega ^{2s}}\left( \omega ^{2s}-1\right) ^{2} \\ 
2\frac{\xi }{\omega ^{2s}}\left( \omega ^{2s}-1\right) \left( \xi
^{2}+\omega ^{2s}\right) & \frac{1}{\omega ^{2s}}\left( 2\xi ^{2}+\omega
^{2s}-2\xi ^{2}\omega ^{2s}+2\xi ^{2}\omega ^{4s}+\xi ^{4}\omega ^{2s}\right)
& 2\frac{\xi }{\omega ^{2s}}\left( \omega ^{2s}-1\right) \left( \xi
^{2}\omega ^{2s}+1\right) \\ 
\frac{\xi ^{2}}{\omega ^{2s}}\left( \omega ^{2s}-1\right) ^{2} & \frac{\xi }{%
\omega ^{2s}}\left( \omega ^{2s}-1\right) \left( \xi ^{2}\omega
^{2s}+1\right) & \frac{1}{\omega ^{2s}}\left( \xi ^{2}\omega ^{2s}+1\right)
^{2}%
\end{array}%
\right) . 
\]%
\normalsize
Then $\gamma _{c_{1}}\alpha ^{2(u_{1}-u_{2})}\gamma _{c_{2}}^{-1}=\tau_{\xi} \beta
^{m}\alpha ^{2s}\tau_{\xi} $ implies $N_{0}=\theta M$ for some $\theta \in
GF(q)^{\ast }$, and hence $\omega ^{2s}=1$. Thus, $\gamma _{c_{1}}\alpha
^{2(u_{1}-u_{2})}\gamma _{c_{2}}^{-1}=1$ and hence $u_{1}=u_{2}$ and $%
c_{1}=c_{2}$.

If $m=1$, then $\tau_{\xi} \beta \alpha ^{2s}\tau_{\xi} $ is represented by 
\scriptsize
\[
N_{1}=\left( 
\begin{array}{ccc}
\frac{\xi ^{2}}{\omega ^{2s}}\left( \omega ^{2s}-1\right) ^{2} & \frac{\xi }{%
\omega ^{2s}}\left( \omega ^{2s}-1\right) \left( \xi ^{2}\omega
^{2s}+1\right) & \frac{1}{\omega ^{2s}}\left( \xi ^{2}\omega ^{2s}+1\right)
^{2} \\ 
-2\frac{\xi }{\omega ^{2s}}\left( \omega ^{2s}-1\right) \left( \xi
^{2}+\omega ^{2s}\right) & -\frac{1}{\omega ^{2s}}\left( 2\xi ^{2}+\omega
^{2s}-2\xi ^{2}\omega ^{2s}+2\xi ^{2}\omega ^{4s}+\xi ^{4}\omega ^{2s}\right)
& -2\frac{\xi }{\omega ^{2s}}\left( \omega ^{2s}-1\right) \left( \xi
^{2}\omega ^{2s}+1\right) \\ 
\frac{1}{\omega ^{2s}}\left( \xi ^{2}+\omega ^{2s}\right) ^{2} & \frac{\xi }{%
\omega ^{2s}}\left( \omega ^{2s}-1\right) \left( \xi ^{2}+\omega ^{2s}\right)
& \frac{\xi ^{2}}{\omega ^{2s}}\left( \omega ^{2s}-1\right) ^{2}%
\end{array}%
\right) . 
\]%
\normalsize
Then $\gamma _{c_{1}}\alpha ^{2(u_{1}-u_{2})}\gamma _{c_{2}}^{-1}=\tau_{\xi} \beta
\alpha ^{2s}\tau_{\xi} $ implies $N_{1}=\theta M$ for some $\theta \in GF(q)^{\ast
}$, and hence $\omega ^{2s}=-\xi ^{2}$, from which we derive 
\[
N_{1}=\left( 
\begin{array}{ccc}
-\left( \xi ^{2}+1\right) ^{2} & -\frac{1}{\xi }\left( \xi ^{2}+1\right)^{2}
\left( \xi ^{2}-1\right) & -\frac{1}{\xi ^{2}}\left( \xi ^{2}+1\right) ^{2}\left( \xi ^{2}-1\right) ^{2}
\\ 
0 & \left( \xi ^{2}+1\right) ^{2} & \frac{2}{\xi }\left( \xi ^{2}+1\right)^{2}
\left( \xi ^{2}-1\right) \\ 
0 & 0 & -\left( \xi ^{2}+1\right) ^{2}%
\end{array}%
\right) . 
\]%
Then $\theta =\left( \xi ^{2}+1\right) ^{2}$ and $\omega ^{2u_{1}-2u_{2}}=-1$ since $\omega ^{2u_{1}-2u_{2}}=1$ implies $c_{1}=c_{2}$, hence $q \equiv 1 \pmod{4}$ and $c_{2}=\xi ^{-1}-\xi -c_{1}$. Thus, $T_{B}\tau_{\xi} \gamma
_{c_{1}}\alpha ^{2u_{1}}=T_{B}\tau_{\xi} \gamma _{c_{2}}\alpha ^{2u_{2}}$ with $%
c_{2}=\xi ^{-1}-\xi -c_{1}$ and $u_{2}=u_{1}+\frac{q-1}{4}$, which is (iii).

Finally, assume that $q \equiv 1 \pmod{4}$, $T_{B}\tau _{\xi _{1}}\gamma _{c_{1}}\alpha
^{2u_{1}}=T_{B}\tau _{\xi _{2}}\gamma _{c_{2}}\alpha ^{2u_{2}}$ with $\xi_{1} \neq \xi_{2}$. Then $\gamma _{c_{1}}\alpha
^{2(u_{1}-u_{2})}\gamma _{c_{2}}^{-1}=\tau _{\xi _{1}}\beta ^{m}\alpha
^{2s}\tau _{\xi _{2}}$, and hence $O^{\tau _{\xi _{1}}\beta ^{m}\alpha
^{2s}\tau _{\xi _{2}}}=O$. On the other hand, easy computations show that
\small
\begin{eqnarray*}
O^{\tau _{\xi _{1}}\alpha ^{2s}\tau _{\xi _{2}}} &=&\left( \frac{1}{\omega
^{2s}}\left( \xi _{2}-\omega ^{2s}\xi _{1}\right) ^{2},-\frac{1}{\omega ^{2s}%
}\left( \xi _{2}-\omega ^{2s}\xi _{1}+\omega ^{2s}\xi _{1}\xi
_{2}^{2}-\omega ^{4s}\xi _{1}^{2}\xi _{2}\right) ,\frac{1}{\omega ^{2s}}%
\left( \omega ^{2s}\xi _{1}\xi _{2}+1\right) ^{2}\right),  \\
O^{\tau _{\xi _{1}}\beta \alpha ^{2s}\tau _{\xi _{2}}} &=&\left( \frac{1}{%
\omega ^{2s}}\left( \xi _{1}\xi _{2}+\omega ^{2s}\right) ^{2},-\frac{1}{%
\omega ^{2s}}\left( \omega ^{2s}\xi _{1}-\omega ^{4s}\xi _{2}+\xi
_{1}^{2}\xi _{2}-\omega ^{2s}\xi _{1}\xi _{2}^{2}\right) ,\frac{1}{\omega
^{2s}}\left( \xi _{1}-\omega ^{2s}\xi _{2}\right) ^{2}\right)\text{.} 
\end{eqnarray*}

\normalsize
Hence, either $\xi _{2}-\omega ^{2s}\xi _{1}=0$ or $\xi _{1}\xi _{2}+\omega
^{2s}=0$. The former implies $\xi _{1}\xi _{2}=\omega ^{2s}\xi _{1}^{2}$,
and hence $\chi (\xi _{1}\xi _{2})=1$, the latter $\xi _{1}\xi _{2}=-\omega
^{2s}$ and again $\chi (\xi _{1}\xi _{2})=1$ since $q\equiv 1\pmod{4}$. $\hfill\square$
\end{proof}

\bigskip

\bigskip

If $q \equiv 1 \pmod{4}$, let $\xi _{1},\xi _{2}\in GF(q)^{\ast }$, $\xi _{1} \neq \xi _{2}$,  such that $\xi _{1}^{2},\xi
_{2}^{2}\neq -1$ and $\chi (\xi _{1}\xi _{2})=-1$. Such $\xi
_{1},\xi _{2}$ do exist for $q>5$. For instance, $\xi _{1}=1$ and $\xi
_{2}=\omega $ fulfill the previous properties.

\bigskip

\bigskip

The group $T$ acts block-transitively on $\mathcal{D}$ and it is clearly that the action of $T$ on $\mathcal{B}$ is equivalent to the action $T$ on the
set of cosets of $T_{B}$ in $T$. Now, if $q \equiv -1 \pmod{4}$, then $F_{\xi}=W\cup \tau
_{\xi}WH$, where and $H$ is the cyclic subgroup of order $\frac{q-1}{2}$ of $T_{B}$, is a
system of distinct representatives of the cosets of $T_{B}$ in $T$ by Lemma \ref{LSR1}.

If $q \equiv 1 \pmod{4}$, then $F_{\xi_{1},\xi_{2}}=W\cup \tau
_{\xi _{1}}WH\cup \tau
_{\xi _{2}}WH$
contains a
system of distinct representatives of the cosets of $T_{B}$ in $T$ again by Lemma %
\ref{LSR1}. Hence, we have the following proposition.

\begin{proposition}\label{BF}
Either $q \equiv -1 \pmod{4}$ and $\mathcal{B}=B^{F_{\xi}}$, or $q \equiv 1 \pmod{4}$ and $\mathcal{B}=B^{F_{\xi_{1},\xi_{2}}}$. 
\end{proposition}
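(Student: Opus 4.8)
The plan is to read the proposition off from the block-transitivity of $T$ on $\mathcal{D}$ noted just above: the bijection $T_{B}g\mapsto B^{g}$ between the right cosets of $T_{B}$ in $T$ and $\mathcal{B}$ shows that for any $S\subseteq T$ one has $B^{S}=\mathcal{B}$ if and only if $T_{B}S=T$, i.e.\ $S$ meets every right coset of $T_{B}$. Since $T_{B}\cong D_{q-1}$ has order $q-1$ and $\left\vert T\right\vert=\frac{q(q^{2}-1)}{2}$, there are $\left\vert T:T_{B}\right\vert=\frac{q(q+1)}{2}$ such cosets, so it suffices to check that $F_{\xi}$ meets all of them when $q\equiv-1\pmod 4$, and that $F_{\xi_{1},\xi_{2}}$ meets all of them when $q\equiv1\pmod 4$.

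First I would do the elementary bookkeeping. The subgroup $W$ is unipotent of order $q$, while $H=\langle\alpha^{2}\rangle$ is cyclic and semisimple of order $\frac{q-1}{2}$, so $W\cap H=1$; a glance at the representative (\ref{rep2}), which has nonzero entries below the diagonal, shows moreover that no $\gamma_{c}$ equals any $\tau_{\xi_{i}}\gamma_{c'}\alpha^{2u}$ and that the elements $\tau_{\xi_{i}}\gamma_{c}\alpha^{2u}$, $c\in GF(q)$, $0\le u<\frac{q-1}{2}$, are pairwise distinct. Hence $W$ contributes $q$ distinct cosets $T_{B}\gamma_{c}$ by Lemma \ref{LSR1}(i), all different from any $T_{B}\tau_{\xi_{i}}\gamma_{c'}\alpha^{2u}$ by Lemma \ref{LSR1}(ii).

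For $q\equiv-1\pmod 4$ this finishes quickly: by Lemma \ref{LSR1}(iii) the $\frac{q(q-1)}{2}$ cosets $T_{B}\tau_{\xi}\gamma_{c}\alpha^{2u}$ are pairwise distinct, since the only coincidence permitted by that lemma would force $q\equiv1\pmod 4$. Together with the $q$ cosets from $W$, the set $F_{\xi}$ meets $q+\frac{q(q-1)}{2}=\frac{q(q+1)}{2}=\left\vert T:T_{B}\right\vert$ distinct cosets, i.e.\ all of them, so $T_{B}F_{\xi}=T$ and $\mathcal{B}=B^{T}=B^{F_{\xi}}$.

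For $q\equiv1\pmod 4$, $F_{\xi_{1},\xi_{2}}$ is strictly larger than a transversal, so the task is to count the \emph{distinct} cosets it meets. By Lemma \ref{LSR1}(iii) the map $(c,u)\mapsto(\xi_{1}^{-1}-\xi_{1}-c,\,u+\frac{q-1}{4})$ is a fixed-point-free involution on the $\frac{q(q-1)}{2}$ pairs, and its $2$-element orbits are exactly the fibres of $(c,u)\mapsto T_{B}\tau_{\xi_{1}}\gamma_{c}\alpha^{2u}$; hence $\tau_{\xi_{1}}WH$ meets exactly $\frac{q(q-1)}{4}$ cosets, and similarly $\tau_{\xi_{2}}WH$ meets $\frac{q(q-1)}{4}$ cosets. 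By Lemma \ref{LSR1}(iv) and the choice $\chi(\xi_{1}\xi_{2})=-1$ these two families of cosets are disjoint, and by Lemma \ref{LSR1}(ii) neither contains a coset $T_{B}\gamma_{c}$; so $F_{\xi_{1},\xi_{2}}$ meets $q+\frac{q(q-1)}{4}+\frac{q(q-1)}{4}=\frac{q(q+1)}{2}=\left\vert T:T_{B}\right\vert$ distinct cosets, hence all of them, and $\mathcal{B}=B^{T}=B^{F_{\xi_{1},\xi_{2}}}$. The one point needing care is precisely this last count — that the collapse inside each $\tau_{\xi_{i}}WH$ is exactly two-to-one and that the two families stay disjoint — but both are delivered verbatim by parts (iii) and (iv) of Lemma \ref{LSR1}, so no further computation is required.
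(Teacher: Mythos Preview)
Your proof is correct and takes essentially the same approach as the paper: both use block-transitivity of $T$ together with Lemma \ref{LSR1} to show that $F_{\xi}$ (respectively $F_{\xi_{1},\xi_{2}}$) meets every right coset of $T_{B}$ in $T$. The paper states this in one line, while you spell out the coset count explicitly; note that for $q\equiv 1\pmod 4$ Lemma \ref{LSR1}(iii) literally only gives you \emph{at most} two-to-one collapse, but since your lower bound already matches $\left\vert T:T_{B}\right\vert$ this is all you need.
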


Let $i,j \in \{-1,+1\}$ and $\xi$ be $1$ for $q \equiv -1 \pmod{4}$ or an element in $\{\xi_{1},\xi_{2}\}$ for $q \equiv 1 \pmod{4}$. By using (\ref{rep1}) and (\ref{rep2}), one obtains 
\begin{equation}\label{Oc}
\mathcal{O}_{h,i}^{\gamma _{c}} =\left\{ (h\mu _{1}^{2},\left( ch\mu
_{1}+1\right) \mu _{1},hc^{2}\mu _{1}^{2}+2c\mu _{1}+1):\mu _{1}\in
Q_{i}\right\},
\end{equation}
\begin{equation}\label{Ochpm}
\mathcal{O}_{h^{p^{m}},j}^{\gamma _{c}} =\left\{ (h^{p^{m}}\mu
_{2}^{2},\left( ch^{p^{m}}\mu _{2}+1\right) \mu _{2},h^{p^{m}}c^{2}\mu
_{2}^{2}+2c\mu _{2}+1):\mu _{2}\in Q_{j}\right\},
\end{equation}%
\begin{equation}\label{Oh}
\mathcal{O}_{h,i}^{\tau_{\xi} \gamma _{c}\alpha ^{2u}}:\left\{ 
\begin{array}{l}
X_{0}=\omega ^{2u}\left( h\mu _{1}^{2}+2\xi \mu _{1}+\xi ^{2}\right)  \\ 
X_{1}=h\left( \xi +c\right) \mu _{1}^{2}+\left( \xi ^{2}+2c\xi -1\right) \mu
_{1}+\xi \left( c\xi -1\right)  \\ 
X_{2}=\frac{1}{\omega ^{2u}}\left( h \left( c+\xi \right) ^{2}\mu
_{1}^{2}+2\left( c\xi -1\right) \left( c+\xi \right) \mu _{1}+\left( c\xi
-1\right) ^{2}\right) 
\end{array}%
\right. 
\end{equation}
with $\mu _{1}\in Q_{i}$, and 
\begin{equation}\label{Ohpm}
\mathcal{O}_{h,j}^{\tau_{\xi} \gamma _{c}\alpha ^{2u}}:\left\{ 
\begin{array}{l}
X_{0}=\omega ^{2u}\left( h^{p^{m}}\mu _{2}^{2}+2\xi \mu _{2}+\xi ^{2}\right) 
\\ 
X_{1}=h^{p^{m}}\left( \xi +c\right) \mu _{2}^{2}+\left( \xi ^{2}+2c\xi
-1\right) \mu _{2}+ \xi \left( c\xi -1\right)  \\ 
X_{2}=\frac{1}{\omega ^{2u}}\left( h^{p^{m}} \left( c+\xi \right)
^{2}\mu _{2}^{2}+2\left( c\xi -1\right) \left( c+\xi \right) \mu _{2}+\left(
c\xi -1\right) ^{2}\right) 
\end{array}%
\right. 
\end{equation}%
with $\mu _{2}\in Q_{j}$.

\begin{remark}\label{Nozero}
It worth nothing that, the first and the third coordinate of the points in $\mathcal{O}_{h,i}^{\gamma _{c}}, \mathcal{O}_{h^{p^{m}},j}^{\gamma _{c}}, \mathcal{O}_{h,i}^{\tau_{\xi} \gamma _{c}\alpha ^{2u}}$ or $\mathcal{O}_{h,j}^{\tau_{\xi} \gamma _{c}\alpha ^{2u}}$ are never equal to zero since the lines $X_{0}=0$ and $X_{2}=0$ are tangent to $\mathcal{C}$ and these do not contain $I$-points of $\mathcal{C}$ by \cite[Table 8.1]{Hir}.   
\end{remark}

\subsubsection{Reduction to the case $q \equiv 1 \pmod{4}$}

\begin{lemma}
\label{conicsol}If $q\equiv -1\pmod{4}$, for each $h\in 1+Q_{+1}$ the number of elements $t\in \left\{1,...,q-2\right\} $ such
that $\omega ^{2t}\left( h-1\right) +\omega ^{t}\left( 2h+2\right)
+h-1\in Q_{-1}$, and $\omega ^{t}\neq \pm \frac{\sqrt{h}-1}{\sqrt{h}+1},%
\pm \frac{\sqrt{h}+1}{\sqrt{h}-1}$ when $h \in Q_{+1}$, is at least $\frac{q-1}{2}$.
\end{lemma}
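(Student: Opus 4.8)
The plan is to recast the condition through the quadratic character $\chi$ of $GF(q)$ and apply the standard evaluation of the character sum of a binary quadratic. Put $x=\omega^{t}$: as $t$ ranges over $\{1,\dots,q-2\}$ the element $x$ ranges over $GF(q)^{\ast}\setminus\{1\}$, and the displayed quantity equals $f(x)$ with
\[
f(X)=(h-1)X^{2}+(2h+2)X+(h-1).
\]
Since $h\in 1+Q_{+1}$ we have $h-1\in Q_{+1}$, so $h\neq 1$ and $f$ is genuinely quadratic; since $q\equiv -1\pmod{4}$ we have $-1\in Q_{-1}$, so also $h\neq 0$. The discriminant of $f$ is $(2h+2)^{2}-4(h-1)^{2}=16h\neq 0$.

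Next I would compute $N$, the number of $x\in GF(q)$ with $\chi(f(x))=-1$. By the standard formula $\sum_{x\in GF(q)}\chi(aX^{2}+bX+c)=-\chi(a)$ whenever $b^{2}-4ac\neq 0$, here $\sum_{x\in GF(q)}\chi(f(x))=-\chi(h-1)=-1$ since $h-1\in Q_{+1}$, whence $N=\frac12(q-r+1)$, where $r$ is the number of roots of $f$ in $GF(q)$. As $f$ has a root precisely when $16h$ is a square, i.e.\ precisely when $h\in Q_{+1}$, we obtain $r=2$ and $N=\frac{q-1}{2}$ if $h\in Q_{+1}$, and $r=0$ and $N=\frac{q+1}{2}$ if $h\in Q_{-1}$. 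Restricting to $x\in GF(q)^{\ast}\setminus\{1\}$: $f(0)=h-1\in Q_{+1}$, so $x=0$ never contributes; and $f(1)=4h$ is a non-square exactly when $h\in Q_{-1}$, so $x=1$ contributes exactly in that case. In either parity, the number of $t\in\{1,\dots,q-2\}$ with $f(\omega^{t})\in Q_{-1}$ equals $\frac{q-1}{2}$, which already proves the statement when $h\in Q_{-1}$, since then the second clause is vacuous.

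The case $h\in Q_{+1}$ is where the real work lies, and the main obstacle is to control the four forbidden values $\pm\frac{\sqrt h-1}{\sqrt h+1}$, $\pm\frac{\sqrt h+1}{\sqrt h-1}$. Writing $s=\sqrt h$ and using $\sqrt{16h}=4s$, the roots of $f$ are $\frac{-(h+1)\pm 2s}{h-1}$; since $-(h+1)\pm 2s=-(s\mp 1)^{2}$ and $h-1=(s-1)(s+1)$, these equal $-\frac{s-1}{s+1}$ and $-\frac{s+1}{s-1}$. Hence two of the four forbidden values are exactly the roots of $f$, at which $\chi(f)=0$, so discarding them removes nothing from the count. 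For the remaining two, using that $f$ is palindromic (so $X^{2}f(1/X)=f(X)$) one computes $f\left(\frac{s-1}{s+1}\right)=\frac{4(h^{2}-1)}{(s+1)^{2}}$ and $f\left(\frac{s+1}{s-1}\right)=\frac{4(h^{2}-1)}{(s-1)^{2}}$, each a square multiple of $h^{2}-1$; since $\chi(h^{2}-1)=\chi(h-1)\chi(h+1)=\chi(h+1)$, whether these two points lie in the counted set is decided solely by $\chi(h+1)$. The final and most delicate step is then the combinatorial bookkeeping of this sign as $h$ ranges over $(1+Q_{+1})\cap Q_{+1}$, for which the incidence counts for the Paley--Hadamard difference set and the Paley graph used in the proof of Corollary~\ref{Elle} are the natural instruments.
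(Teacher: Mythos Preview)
Your character-sum computation is correct and cleaner than the paper's geometric argument via the auxiliary conic $\mathcal{K}$; both routes give that exactly $\frac{q-1}{2}$ values of $t\in\{1,\dots,q-2\}$ satisfy $f(\omega^{t})\in Q_{-1}$, in either parity of $\chi(h)$. Your identification of the two negative forbidden values with the roots of $f$ (hence not counted) and of the two positive ones as having $\chi(f)=\chi(h+1)$ is also correct.

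The gap is in your last paragraph. The lemma is a statement about \emph{each} fixed $h\in(1+Q_{+1})\cap Q_{+1}$, not an average over such $h$, so no ``combinatorial bookkeeping as $h$ ranges'' via Paley--Hadamard or Paley-graph counts can rescue the bound for a bad $h$. And bad $h$ do occur: whenever $h\in(1+Q_{+1})\cap Q_{+1}$ with $h+1\in Q_{-1}$, the two positive forbidden values lie in the counted set and must be removed, leaving only $\frac{q-1}{2}-2=\frac{q-5}{2}$. For instance at $q=7$ one has $h=2$ (the unique element of $(1+Q_{+1})\cap Q_{+1}$), $h+1=3\in Q_{-1}$, and a direct check gives the single surviving value $\omega^{t}=6$. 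Thus the stated bound $\frac{q-1}{2}$ is not attainable in general, and no completion of your argument (or any argument) will reach it.

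The paper's own proof stumbles at the same spot: for $h\in Q_{+1}$ it records $\frac{q+3}{2}$ ``external'' lines among the $m_{t}$ with $t\in\{1,\dots,q-2\}$, but this number is really external-plus-tangent (the two tangent lines through $P_{\infty}$ were not subtracted), and then it removes only the two \emph{positive} forbidden values rather than the four listed in the statement. The two errors happen to cancel numerically to $\frac{q-1}{2}$, but the argument does not establish that bound.

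What is actually \emph{used} downstream (in the proof of Proposition~\ref{q=1mod4}) is merely the existence of one admissible $t_{0}$. Your analysis already yields the sharp lower bound $\frac{q-5}{2}$ in the worst case, and $\frac{q-5}{2}\ge 1$ for all $q\ge 7$, so the application survives; but the lemma as stated should be weakened accordingly.
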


\begin{proof}
Let $h\in 1+Q_{+}$, the number of $\omega ^{t}$ with $t\in
\left\{ 1,...,q-2\right\} $ such that 
\[
\omega ^{2t}\left( h-1\right) +\omega ^{t}\left( 2h+2\right)
+h-1=y_{t}^{2}
\]%
for some $y_{t}\in GF(q)$ is equal to the number of lines of $PG(2,q)$ of
the form $m_{t}:X_{0}-\omega ^{t}X_{2}=0$ with $t\in \left\{
1,...,q-2\right\} $ having either $1$ or $2$ points in common with the
following irreducible conic: 
\[
\mathcal{K}:X_{0}^{2}\left( h-1\right) +X_{0}X_{2}\left( 2h+2\right)
+X_{2}^{2}\left( h-1\right) -X_{1}^{2}=0.
\]%
Now, each line $m_{t}$ contains the point $P_{\infty }=(0,1,0)$, and $%
P_{\infty }\notin \mathcal{K}$. The polar line of $P_{\infty }$ with
respect to $\mathcal{K}$ is $\ell :X_{1}=0$. 

If $h \in Q_{-1}$, then $\ell$ is external to $\mathcal{K}$, and hence $P_{\infty}$ is internal to $\mathcal{K}$. The secant lines to $\mathcal{K}$ through $P_{\infty }$
are $\frac{q+1}{2}$ by \cite[Table 8.2]{Hir}. The lines $m_{0}:X_{0}=0$ and $%
m_{\infty }:X_{2}=0$ intersect $\mathcal{K}$ in $(0,\pm \sqrt{h-1},1)$
in $(1,\pm \sqrt{h-1},0)$, respectively. Moreover, the line $m_{q-1}:X_{0}-X_{2}=0$
is external to $\mathcal{K}$ since $h \in Q_{-1}$. Thus, the number of lines
to $\mathcal{K}$ of the form $m_{t}:X_{0}-\omega ^{t}X_{2}=0$ with $t\in
\left\{ 1,...,q-2\right\} $ intersecting $\mathcal{K}$ is $\frac{q+1}{2}-2=%
\frac{q-3}{2}$, and hence the number of the external ones is $q-2-\frac{q-3}{2}=\frac{q-1}{2}$. Thus, the assertion follows in this case.

If $h \in Q_{+1}$, then $\ell \cap 
\mathcal{K}=\left\{ Q_{1,},Q_{2,}\right\} $, where 
\[
Q_{1}=\left( -\frac{\sqrt{h}+1}{\sqrt{h}-1},0,1\right) \text{ and }%
Q_{2}=\left(- \frac{\sqrt{h}-1}{\sqrt{h}+1},0,1\right) \text{,}
\]%
hence $P_{\infty }$ is external to $\mathcal{K}$, and $a_{1}:$ $X_{0}+\frac{%
\sqrt{h}+1}{\sqrt{h}-1}X_{2}=0$ and $a_{2}:X_{0}+\frac{\sqrt{h}-1%
}{\sqrt{h}+1}X_{2}=0$ are the tangents lines to $\mathcal{K}$ through $%
P_{\infty }$. The secant lines to $\mathcal{K}$ through $P_{\infty }$
are $\frac{q-1}{2}$ by \cite[Table 8.2]{Hir}. The lines $m_{0}:X_{0}=0$ and $%
m_{\infty }:X_{2}=0$ intersect $\mathcal{K}$ in $(0,\pm \sqrt{h-1},1)$
in $(1,\pm \sqrt{h-1},0)$, respectively. Moreover, the line $m_{q-1}:X_{0}-X_{2}=0$
intersects $\mathcal{K}$ in $(1,\pm 2\sqrt{h},1)$. Thus, the number of lines
to $\mathcal{K}$ of the form $m_{t}:X_{0}-\omega ^{t}X_{2}=0$ with $t\in
\left\{ 1,...,q-2\right\} $ intersecting $\mathcal{K}$ is $\frac{q-1}{2}-2-1=%
\frac{q-7}{2}$, and hence the number of the external ones is $q-2-\frac{q-7}{2}=\frac{q+3}{2}$, which is equivalent to the number of
elements $t\in \left\{ 2,...,q-1\right\} $ such that $\omega
^{2t}\left( h-1\right) +\omega ^{t}\theta \left( 2h+2\right) +h-1\in
Q_{-1}$. Finally, excluding the remaining possible values $\frac{\sqrt{h}-1}{\sqrt{h}+1},\frac{\sqrt{h%
}+1}{\sqrt{h}-1}$, the assertion follows. $\hfill\square$
\end{proof}

\begin{proposition}\label{q=1mod4}
If $\mathcal{D}$ is a $2$-design of type II, then $q\equiv 1 \pmod{4}$.
\end{proposition}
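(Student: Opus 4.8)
\emph{Strategy and set-up.} We assume $q\equiv -1\pmod 4$ and derive a contradiction by exhibiting a pair of points of $\mathcal{D}$ lying in strictly more than $\lambda=2$ blocks; Lemma \ref{conicsol} will supply the needed count. Since $-1\in Q_{-1}$ when $q\equiv -1\pmod 4$, the value $\xi=1$ meets the conditions imposed just before (\ref{rep1}). Hence by Proposition \ref{BF} we have $\mathcal{B}=B^{F_1}$ with $F_1=W\cup\tau_1WH$, where $H=\langle\alpha^2\rangle$ is the cyclic subgroup of $T_B$ of order $\tfrac{q-1}{2}$, and by Lemma \ref{BOO}(1) the reference block is $B=\mathcal{C}_h^{\ast}$ for some $h\in 1+Q_{+1}$. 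Thus every block of $\mathcal{D}$ is one of the \emph{$W$-blocks} $B^{\gamma_c}$ with $c\in GF(q)$, described by (\ref{Oc}), or one of the \emph{$\tau_1$-blocks} $B^{\tau_1\gamma_c\alpha^{2u}}$ with $c\in GF(q)$ and $0\le u<\tfrac{q-1}{2}$, described by (\ref{Oh}); by Lemma \ref{LSR1}(i)--(iii) (and here $q\equiv -1\pmod 4$ is used) these $q+\tfrac{q(q-1)}{2}=b$ blocks are pairwise distinct.

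\emph{The $W$-block contribution is negligible.} A direct computation with (\ref{Oc}) shows that a point $(h\nu^2,\nu,1)$ of $B$ lies on $B^{\gamma_c}$ precisely when $c\nu(1-h)(ch\nu-2)=0$, i.e. only for $c=0$ and $c=\tfrac{2}{h\nu}$. In particular $P:=(h,1,1)$, which is an $I$-point since $1-h=-(h-1)\in Q_{-1}$, lies on exactly the two $W$-blocks $B=B^{\gamma_0}$ and $B':=B^{\gamma_{2/h}}$, and one checks from the same computation that $B\cap B'=\{P\}$. Consequently, for \emph{any} $I$-point $Z\ne P$, at most one of the blocks of $\mathcal{D}$ through $\{P,Z\}$ is a $W$-block.

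\emph{Counting the $\tau_1$-blocks through a good pair, and the contradiction.} This is the heart of the proof, and it is where (\ref{Oh}) and Lemma \ref{conicsol} enter. For a suitable second $I$-point $Z$ (depending on $h$), one substitutes the coordinates of $P$ and of $Z$ into (\ref{Oh}); invoking Remark \ref{Nozero} to discard the vanishing of the first and third coordinates, one eliminates the parameter $c$ and reduces the requirement that $B^{\tau_1\gamma_c\alpha^{2u}}$ contain both $P$ and $Z$ to the solvability in $GF(q)$ of a single equation of the shape $Y^2=\omega^{2t}(h-1)+\omega^t(2h+2)+(h-1)$, where $\omega^t$ with $t\in\{1,\dots,q-2\}$ runs over the candidate values indexing the $\tau_1$-blocks through $\{P,Z\}$; moreover the values of $\omega^t$ for which no genuine new $\tau_1$-block arises — the block degenerates, or coincides with $B$ or $B'$ — turn out to be exactly $\pm\tfrac{\sqrt h-1}{\sqrt h+1}$ and $\pm\tfrac{\sqrt h+1}{\sqrt h-1}$ when $h\in Q_{+1}$. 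Since distinct admissible $(c,u)$ yield distinct blocks by Lemma \ref{LSR1}, the number of $\tau_1$-blocks through $\{P,Z\}$ is at least the quantity estimated in Lemma \ref{conicsol}, hence at least $\tfrac{q-1}{2}$. As $q>5$ for a $2$-design of type II, $\tfrac{q-1}{2}>2=\lambda$, so the pair $\{P,Z\}$ lies in more than $\lambda$ blocks, which is impossible in a $2$-$(v,k,2)$ design. Therefore $q\equiv -1\pmod 4$ cannot occur, i.e. $q\equiv 1\pmod 4$.

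\emph{The main difficulty.} The obstacle is the elimination step just sketched: in (\ref{Oh}) the coordinates are quadratic in the running parameter $\mu_1$ with coefficients that are themselves quadratic in $c$, so rewriting ``the $\tau_1$-block through $P$ and $Z$'' as a single quadratic condition in $\omega^t$ — and, above all, checking that the exceptional $\omega^t$ are matched precisely with the excluded values $\pm\tfrac{\sqrt h\pm1}{\sqrt h\mp1}$ of Lemma \ref{conicsol} — requires careful case analysis. It is also here that $q\equiv -1\pmod 4$ is genuinely essential: it makes $\xi=1$ admissible and, through Lemma \ref{LSR1}(iii), guarantees that the $\tau_1$-blocks are pairwise distinct, so that the count is not inflated.
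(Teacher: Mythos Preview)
Your $W$-block computation is correct, but the heart of your argument---the $\tau_1$-block count---has a genuine gap, and the way you invoke Lemma \ref{conicsol} appears to misread what that lemma provides.

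In the paper's proof, the parameter $t$ in Lemma \ref{conicsol} does not index blocks through a fixed pair; it indexes the \emph{choice of pair} itself. Both test points are taken on the line $X_1=0$, namely $A_1=(\omega^{2t_1},0,1)$ and $A_2=(\omega^{2t_2},0,1)$, and $t_0=t_1-t_2$ is fixed once the pair is fixed. Lemma \ref{conicsol} is used only to guarantee that \emph{some} admissible $t_0$ exists (the $\tfrac{q-1}{2}$ lower bound merely ensures the set of good $t_0$ is nonempty for $q\ge 7$). With such a pair chosen, the system (\ref{sistema}) is analysed: the non-square condition $\omega^{2t_0}(h-1)+2\omega^{t_0}(h+1)+(h-1)\in Q_{-1}$ kills one sign branch, the exclusion $\omega^{t_0}\neq\tfrac{\sqrt h\pm1}{\sqrt h\mp1}$ forces the discriminant in the surviving branch to be nonzero, and one obtains \emph{two} values of $\mu_1$, each yielding \emph{two} values of $u$ (differing by $\tfrac{q-1}{2}$), hence exactly four $\tau_1$-blocks through $\{A_1,A_2\}$---not $\tfrac{q-1}{2}$ of them.

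Your proposal instead fixes $P=(h,1,1)\notin\ell$, leaves $Z$ unspecified, and asserts that after eliminating $c$ the containment $\{P,Z\}\subset B^{\tau_1\gamma_c\alpha^{2u}}$ reduces to the \emph{solvability} of $Y^2=\omega^{2t}(h-1)+2\omega^t(h+1)+(h-1)$, with $t$ now ranging over ``candidate values indexing the $\tau_1$-blocks''. But Lemma \ref{conicsol} bounds below the number of $t$ for which this expression is a \emph{non}-square, so even granting your reduction you would be lower-bounding the number of $t$ with \emph{no} block, not the number with a block. More fundamentally, no such reduction is carried out: you describe it as ``requires careful case analysis'' and leave it undone. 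Since for a fixed pair there is a single $t_0$ (in the paper's parametrisation) rather than a range of $t$'s, the claimed correspondence between values of $t$ and $\tau_1$-blocks through $\{P,Z\}$ does not match the actual algebra of (\ref{Oh}), and the assertion that at least $\tfrac{q-1}{2}$ such blocks exist is unsupported. The proof as written is therefore incomplete at its decisive step.
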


\begin{proof}
Assume that $q\equiv -1 \pmod{4}$. Let $A_{1}=(w^{2t_{1}},0,1)$ and $A_{2}=(w^{2t_{2}},0,1)$ such that $\omega
^{2t_{0}}\left( h-1\right) +\omega ^{t_{0}}\left( 2h+2\right) +h-1\in Q_{-1}$
and $\omega ^{t_{0}}\neq 1,\pm \frac{\sqrt{h}-1}{ \sqrt{h}+1},\pm \frac{\sqrt{h}+1}{%
\sqrt{h}-1}$, where $t_{0}=t_{1}-t_{2}$. Such $t_{0}$ does exist by Lemma %
\ref{conicsol} since $q\geq 7$. Then there is $\psi \in F_{1}$ by
Proposition \ref{BF} such that $\left\{ A_{1},A_{2}\right\} \subset B^{\psi }$, where $B= \mathcal{C}_{h}^{\ast}$ with $h \in 1+Q_{+1}$ by Lemma \ref{BOO}(1).
Hence, there are two distinct points $P_{1}=(h\mu _{1}^{2},\mu _{1},1)$ and $%
P_{2}=(h\mu _{2}^{2},\mu _{2},1)$ in $B$ such that $P_{1}^{\psi }=A_{1}$ and 
$P_{2}^{\psi }=A_{2}$.

If $\psi \in W$, then $P_{i}^{\psi }=\left( \frac{%
h\mu _{i}^{2}}{hc^{2}\mu _{i}^{2}+2c\mu _{i}+1},\frac{\mu _{i}\left( ch\mu
_{i}+1\right) }{hc^{2}\mu _{i}^{2}+2c\mu _{i}+1},1\right) $ since $hc^{2}\mu
_{i}^{2}+2c\mu _{i}+1\neq 0$ (see Remark \ref{Nozero}), and hence $P_{i}^{\psi }=A_{i}$ for $i=1,2$ if
and only if $c=1/h\mu _{1}=1/h\mu _{2}$. So $\mu _{1}=\mu _{2}$, and we reach a contradiction. Then $\psi =\tau \gamma _{c_{0}}\alpha ^{2u_{0}}$ for some $%
c_{0}\in GF(q)$ and $1\leq u_{0}\leq \frac{q-1}{2}$. This is equivalent to
say that $(\mu _{1},c_{0},u_{0},t_{1})$ and $(\mu _{2},c_{0},u_{0},t_{2})$
are two distinct solutions of the following system obtained by using (\ref{Oh}) and (\ref{Ohpm}) (see also Remark \ref{Nozero}):

\begin{equation}
\left\{ 
\begin{array}{r} 
\left( h\mu ^{2}+2\mu +1\right) c-\left( h\mu ^{2}-1\right)  =0,
\\
\omega ^{2u}\frac{h\mu ^{2}+2\mu +1}{h\allowbreak \left( c+1\right) ^{2}\mu
^{2}+2\left( c^{2}-1\right) \mu _{1}+\left( c-1\right) ^{2}} =\omega ^{2t}.
\end{array} \right.
\label{sistema}
\end{equation}
If $2h\mu _{i}^{2}+\mu _{i}+1=0$ with $i$ either $1$ or $2$, then $h\mu
_{i}^{2}-1=0$. Thus $\mu _{i}=-1$, and hence $h=1$, a contradiction since $h \in 1+Q_{+1}$.
Therefore, $2\mu _{i}+h\mu _{i}^{2}+1\neq 0$ for each $i=1,2$. Then
substituting $(\mu _{1},c_{0},u_{0})$ and $(\mu _{2},c_{0},u_{0})$ in the
first equation of (\ref{sistema}) one obtains  
\[
c_{0}=-\frac{h\mu _{1}^{2}-1}{h\mu _{1}^{2}+2\mu _{1}+1}=-\frac{h\mu
_{2}^{2}-1}{h\mu _{2}^{2}+2\mu _{2}+1}\text{,}
\]%
from which we derive $h(\mu _{1}+1)\mu _{2}=-\left( h\mu _{1}+1\right) $.
The previous argument leads to a contradiction if $\mu _{1}=-1$, hence $\mu
_{1}\neq -1$. This implies 
\begin{equation}
c_{0}=-\frac{h\mu _{1}^{2}-1}{h\mu _{1}^{2}+2\mu _{1}+1}\text{ and }\mu
_{2}=-\frac{h\mu _{1}+1}{h(\mu _{1}+1)}  \label{restric}
\end{equation}%
since $h\neq 0$. Note that $\mu _{2}\neq 0$, hence $h\mu _{1}+1\neq 0$. Now,
substituting $(\mu _{1},c_{0},u_{0},t_{1})$ and $(\mu _{2},c_{0},u_{0},t_{2})
$, with $c_{0}$ and $\mu _{2}$ as in (\ref{restric}), in the second equation
of (\ref{sistema}), we get%
\[
\omega ^{2u}=\frac{4\mu _{1}^{2}\left( h-1\right) }{\left( h\mu
_{1}^{2}+2\mu _{1}+1\right) ^{2}}\omega ^{2t_{1}}\text{ and }\omega ^{2u}=\frac{%
 4\left( \mu _{1}+1\right) ^{2}\left( h\mu _{1}+1\right) ^{2}}{%
\left( h-1\right) \left( h\mu _{1}^{2}+2\mu _{1}+1\right) ^{2}}\omega
^{2t_{2}} \text{.}
\]%
If $\omega ^{u_{1}}=\frac{2\mu _{1}\sqrt{\left( h-1\right) }}{ h\mu _{1}^{2}+2\mu
_{1}+1 }\omega ^{t_{1}}$ and $\omega ^{u_{1}}=-\frac{%
2\left( \mu _{1}+1\right) \left( h\mu _{1}+1\right) }{\sqrt{%
\left( h-1\right) }\left( h\mu _{1}^{2}+2\mu _{1}+1\right) }\omega ^{t_{2}}$,
then 
\[
\frac{2\mu _{1}\sqrt{\left( h-1\right) }}{ h\mu
_{1}^{2}+2\mu _{1}+1 }\omega ^{t_{1}}=-\frac{2\left( \mu
_{1}+1\right) \left( h\mu _{1}+1\right) }{\sqrt{\left( h-1\right) }\left(
h\mu _{1}^{2}+2\mu _{1}+1\right) }\omega ^{t_{2}} \text{,}
\]%
from which we deduce 
\[
\frac{\mu _{1}\left( h-1\right) }{\left( \mu _{1}+1\right) \left( h\mu
_{1}+1\right) }=-\omega ^{t_{0}}\text{.}
\]%
This leads
\[
h\omega ^{t_{0}}\mu _{1}^{2}+\left( \omega ^{t_{0}}\left( h+1\right)
+h-1\right) \mu _{1}+ \omega ^{t_{0}} =0 \text{,}
\]%
which has solutions if and only if 
\[
\left( \omega ^{t_{0}}\left( h+1\right) +h-1\right) ^{2}-4\left( h\omega
^{t_{0}}\right) \omega ^{t_{0}}\in Q_{+1}\cup \left\{ 0\right\} \text{.}
\]%
Easy computations yield 
\[
\left( h-1\right) \left( \allowbreak \omega ^{2t_{0}}\left( h-1\right)
+\omega ^{t_{0}}2\left( h+1\right) +h-1\right) \in Q_{+1}\cup \left\{
0\right\} \text{,}
\]%
and hence%
\[
\omega ^{2t_{0}}\left( h-1\right) +\omega ^{t_{0}}2\left( h+1\right) +h-1\in
Q_{+1}\cup \left\{ 0\right\} 
\]%
since $h\in 1+Q_{+}$, which is not the case by our assumption on $\omega
^{t_{0}}$. \ The same conclusion holds for $\omega ^{u_{1}}=-\frac{2\mu _{1}\sqrt{%
\left( h-1\right) }}{ h\mu _{1}^{2}+2\mu _{1}+1 }\omega
^{t_{1}}$ and $\omega ^{u_{1}}=\frac{ 2\left( \mu _{1}+1\right)
\left( h\mu _{1}+1\right) }{\sqrt{\left( h-1\right) }\left( h\mu
_{1}^{2}+2\mu _{1}+1\right) }\omega ^{t_{2}}$.

Thus $\omega ^{u_{1}}=\frac{2\mu _{1}\sqrt{\left( h-1\right) }}{h\mu _{1}^{2}+ 2\mu
_{1}+1 }\omega ^{t_{1}}$, $\omega ^{u_{1}}=\frac{%
 2\left( \mu _{1}+1\right) \left( h\mu _{1}+1\right) }{\sqrt{%
\left( h-1\right) }\left( h\mu _{1}^{2}+2\mu _{1}+1\right) }\omega ^{t_{2}}$
and $\omega ^{u_{2}}=-\omega ^{u_{1}}$. Now, arguing as above, one obtains%
\begin{equation}
h\omega ^{t_{0}}\mu _{1}^{2}+\left( \omega ^{t_{0}}\left( h+1\right)
+h-1\right) \mu _{1}-\omega ^{t_{0}} =0  \label{equat}
\end{equation}%
with 
\[
\omega ^{2t_{0}}\left( h-1\right) -\omega ^{t_{0}}2\left( h+1\right) +h-1\in
Q_{+1}\cup \left\{ 0\right\} \text{.}
\]%
If $\omega ^{2t_{0}}\left( h-1\right) -\omega ^{t_{0}}2\left( h+1\right)
+h-1=0$, then $\omega ^{t_{0}}=\frac{\sqrt{h}-1}{\sqrt{h}+1}$ or $\frac{%
\sqrt{h}+1}{\sqrt{h}-1}$ which is still contrary to our assumption. Therefore 
\[
\omega ^{2t_{0}}\left( h-1\right) -\omega ^{t_{0}}2\left( h+1\right) +h-1\in
Q_{+1} \text{,}
\]%
and hence there are two distinct solutions $\mu _{1},\mu _{1}^{\prime }$ of (%
\ref{equat}). It results that
\begin{eqnarray*}
c(x) &=&-\frac{hx^{2}-1}{hx^{2}+2x+1}\text{, }\mu _{2}(x)=-\frac{hx+1}{h(x+1)%
}\text{ and }\omega ^{u_{1}}(x)=\frac{2x\sqrt{\left( h-1\right) }}{
hx^{2}+2x+1 }\omega ^{t_{1}}, \\
u_{2} &=&u_{1}+\frac{q-1}{2}
\end{eqnarray*}%
with $x=\mu _{1},\mu _{1}^{\prime }$ are solutions of (\ref{sistema}). Hence,
$\left\{ P_{1}(x),P_{2}(x)\right\} ^{\tau \gamma _{c(x)}\alpha
^{2u_{i}(x)}}=\left\{ A_{1},A_{2}\right\} $ for $i=1,2$, where $P_{1}(x)=(hx^{2},hx,1)$
and $P_{2}(x)=(h\mu _{2}(x)^{2},h\mu _{2}(x),1)$ and $x=\mu _{1},\mu
_{1}^{\prime }$. Moreover, the set $\left\{ P_{1}(\mu _{1}),
P_{2}(\mu _{1})\right\}\neq \left\{ P_{1}(\mu _{1}^{\prime }),P_{2}(\mu _{1}^{\prime })\right\}$
since $\mu _{1}\neq \mu _{1}^{\prime }$. Note that $\tau _{1}\gamma
_{c(x)}\alpha ^{2u_{i}(x)}$ with $x=\mu _{1},\mu _{1}^{\prime }$ and $i=1,2$
are four distinct elements of $F_{1}$ by Lemma \ref{LSR1}(iii). Then $B^{\tau _{1}\gamma
_{c(x)}\alpha ^{2u_{i}(x)}}$ with $x=\mu _{1},\mu _{1}^{\prime }$ and $i=1,2$ are four distinct blocks of $\mathcal{D}$ by
Proposition \ref{BF}, and all of them are such that $\left\{ A_{1},A_{2}\right\} \subseteq
B^{\tau \gamma _{c(x)}\alpha ^{2u_{i}(x)}}$. This violates $\lambda =2$.
Thus $q \equiv -1 \pmod{4}$ is ruled out, and hence the assertion follows since $q$ is odd.    
\end{proof}

\subsubsection{Reduction to the case $q=5$}

In this final section, we complete the proof of Theorem \ref{main} by handling the remaining case $q \equiv 1 \pmod{4}$.
\begin{lemma}\label{Syl}
The number of unordered pairs of distinct points of $\mathcal{O}_{\infty }$ contained in a block of $\mathcal{D}$ of the form $B^{\gamma_{c}}$ with $c\in GF(q)^{\ast }$ is $\frac{q-1}{4}$ for $B=\mathcal{O}_{h,i}\cup \mathcal{O}_{h^{p^{m}},i}$, $i \in \{-1,+1\}$, and $0$ otherwise.
\end{lemma}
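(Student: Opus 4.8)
The plan is to use the explicit parametrisations (\ref{Oc}) and (\ref{Ochpm}) of the images $\mathcal{O}_{h,i}^{\gamma_{c}}$ and $\mathcal{O}_{h^{p^{m}},j}^{\gamma_{c}}$ to pin down, for each $c\in GF(q)^{\ast}$, which points of $\mathcal{O}_{\infty}$ lie in $B^{\gamma_{c}}$, and then to count the distinct unordered pairs so produced. A point of $\mathcal{O}_{h,i}^{\gamma_{c}}$ lies in $\mathcal{O}_{\infty}=\{(\mu,0,1):\mu\in Q_{-1}\}$ exactly when its middle coordinate $(ch\mu_{1}+1)\mu_{1}$ vanishes; since $\mu_{1}\neq 0$ by Remark \ref{Nozero}, this forces $\mu_{1}=-(ch)^{-1}$, which lies in $Q_{i}$ if and only if $\chi(ch)=i$ (using $\chi(-1)=1$, as $q\equiv1\pmod 4$). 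Substituting back, the point obtained is $A_{c}^{(h)}:=(1/(c^{2}(h-1)),0,1)$, and it does belong to $\mathcal{O}_{\infty}$ because $\chi\bigl(1/(c^{2}(h-1))\bigr)=\chi(h-1)=-1$, using $h\in 1+Q_{-1}$ (true in every case of Lemma \ref{BOO}(2)). The same computation on the second component produces $A_{c}^{(h^{p^{m}})}:=(1/(c^{2}(h^{p^{m}}-1)),0,1)$ exactly when $\chi(ch^{p^{m}})=j$; here $\chi(h^{p^{m}})=\chi(h)$ because the Frobenius fixes $\pm1$, and $h^{p^{m}}-1=(h-1)^{p^{m}}$, so $A_{c}^{(h^{p^{m}})}\in\mathcal{O}_{\infty}$ as well. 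In particular each of the two components of $B^{\gamma_{c}}$ contributes at most one point of $\mathcal{O}_{\infty}$.

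This already settles the ``$0$ otherwise'' part. If $B=\mathcal{C}_{h}^{\ast}=\mathcal{O}_{h,+1}\cup\mathcal{O}_{h,-1}$, obtaining a point from each component would require $\chi(ch)=+1$ and $\chi(ch)=-1$ at once, which is impossible, so $|B^{\gamma_{c}}\cap\mathcal{O}_{\infty}|\le 1$ for every $c$ and no pair occurs. If $B=\mathcal{O}_{h,i}\cup\mathcal{O}_{h^{p^{m}},j}$ with $i\neq j$, obtaining a point from each component would require $\chi(ch)=i$ and $\chi(ch^{p^{m}})=j$, but $\chi(ch^{p^{m}})=\chi(ch)$ forces $i=j$; again no pair occurs. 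This leaves only the blocks $B=\mathcal{O}_{h,i}\cup\mathcal{O}_{h^{p^{m}},i}$.

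So assume $B=\mathcal{O}_{h,i}\cup\mathcal{O}_{h^{p^{m}},i}$ with $h^{p^{m}}\neq h$. For exactly the $\tfrac{q-1}{2}$ values $c$ with $\chi(c)=i$ (note $\chi(h)=1$ here, since $h\in Q_{+1}$) the set $B^{\gamma_{c}}$ meets $\mathcal{O}_{\infty}$ in the unordered pair $\{A_{c}^{(h)},A_{c}^{(h^{p^{m}})}\}$, and these are two distinct points since $h\neq h^{p^{m}}$. Both points depend on $c$ only through $c^{2}$, so $c$ and $-c$ (which is again admissible, as $\chi(-1)=1$) give the same pair; hence the map $c\mapsto\{A_{c}^{(h)},A_{c}^{(h^{p^{m}})}\}$ on the admissible set has all fibres of size at least two, and so takes at most $\tfrac12\cdot\tfrac{q-1}{2}=\tfrac{q-1}{4}$ values, with equality exactly when every fibre has size two.

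The main obstacle is proving that every fibre has size exactly two, i.e.\ that $c'=\pm c$ are the only admissible solutions of $\{A_{c'}^{(h)},A_{c'}^{(h^{p^{m}})}\}=\{A_{c}^{(h)},A_{c}^{(h^{p^{m}})}\}$. A short computation shows that the remaining possibility, namely the two points being interchanged, forces $(h-1)^{2}=(h^{p^{m}}-1)^{2}$, hence $h+h^{p^{m}}=2$ (since $h\neq h^{p^{m}}$), together with $c'^{2}=-c^{2}$. The plan is therefore to exclude this degenerate configuration: $h+h^{p^{m}}=2$ yields $(h-1)^{p^{m}-1}=-1$, which one plays against $h-1\in Q_{-1}$ by comparing the $2$-adic valuations of $\mathrm{ord}(h-1)$ and of $q-1$ in $GF(q)^{\ast}$, in tandem with the constraints $h^{p^{2m}}=h$, $h^{p^{m}}\neq h$, $1\le m\le\log_{p}(q)/2$; and in the residual cases one notes that an interchanging element $c'=\pm\zeta c$ (with $\zeta^{2}=-1$) fails the admissibility condition $\chi(c')=i$ whenever $\zeta$ is a non-square in $GF(q)$. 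Carrying this out over all the relevant parameter ranges is the delicate step; granting it, every fibre has size two and the count is $\tfrac{q-1}{4}$.
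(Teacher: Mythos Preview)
Your approach and the paper's are essentially identical: both use the parametrisations (\ref{Oc})--(\ref{Ochpm}) to locate the unique point of each $\mathcal{O}_{h,i}^{\gamma_c}$ landing on $\ell:X_1=0$, verify it lies in $\mathcal{O}_\infty$ via $\chi(h-1)=-1$, and then observe the $c\leftrightarrow -c$ symmetry. Your treatment of the ``$0$ otherwise'' cases is a bit more explicit than the paper's (which just writes $B^{\gamma_c}\cap\mathcal{O}_\infty=\{P_c\}$ in the $\mathcal{C}_h^\ast$ case and ``both $P_c$ and $R_c$ lie in $B$ if and only if $i=j$ and $c\in Q_i$'' in the other), but the content is the same.

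The obstacle you isolate in your last paragraph---ruling out the interchange configuration $h+h^{p^m}=2$, $c'^2=-c^2$, so that every fibre has size exactly two---is not addressed in the paper either. The paper simply concludes ``the number of unordered pairs\dots is $\frac{q-1}{4}$ since $|Q_i|=\frac{q-1}{2}$ and for each $c\in Q_i$ one has $B^{\gamma_c}\cap\mathcal{O}_\infty=B^{\gamma_{-c}}\cap\mathcal{O}_\infty$,'' which literally only gives the upper bound you already have. For the sole application (the proof of Theorem \ref{main}), this upper bound is all that is used: one needs $|Y|\geq\frac{(q-1)(q-3)}{8}-\frac{q-1}{4}$ to combine with the later inequality $|Y|\leq\frac{q-1}{4}$ and force $q\leq 7$. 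So you may drop the entire final paragraph and simply record that the number of such pairs is \emph{at most} $\frac{q-1}{4}$; your attempt to nail down ``exactly'' pursues a precision that neither the paper achieves nor the argument requires, and your proposed route to it (via $2$-adic valuations and the square/non-square status of a primitive fourth root of unity) is in any case only a sketch that would need substantial further work for $q\equiv 1\pmod 8$.
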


\begin{proof}
Let $B$ be any block of $\mathcal{D}$, then either $B=\mathcal{C}_{h}^{\ast}$ for some $h \in 1+Q_{-1}$, or $B=\mathcal{O}_{h,i}\cup \mathcal{O}_{h^{p^{m}},j}$ for some $h\in (1+Q_{-1}) \cap Q_{+1}$, $h^{p^{m}}\neq h$ and $h^{p^{2m}}=h$, and $i,j\in \{-1,+1\} $ by Lemma \ref{BOO}(2). In the former case, for each $c \in GF(q)^{\ast}$, it is easy to see that $B^{\gamma_{c}} \cap \mathcal{O}_{\infty }=\{P_{c}\}$ where $P_{c}=\left(h\left( 1 /ch\right) ^{2},1 /ch,1\right)$ and $P_{c}^{\gamma _{c}}=(1/c^{2}(h-1),0,1)$. Thus, the number of unordered pairs of distinct points of $\mathcal{O}_{\infty }$ contained in a block of $\mathcal{D}$ of the form $B^{\gamma_{c}}$ with $c\in GF(q)^{\ast }$ is $0$ in this case.

Assume that $B=\mathcal{O}_{h,i}\cup \mathcal{O}_{h^{p^{m}},j}$ for some $h\in (h \in 1+Q_{-1}) \cap Q_{+1}$, $h^{p^{m}}\neq h$ and $h^{p^{2m}}=h$, and $i,j\in \{-1,+1\} $. Let $c \in GF(q)^{\ast}$, then $P_{c}$ as above and  $R_{c}=\left( h^{p^{m}}\left(1 /ch^{p^{m}}\right)^{2},1/ch^{p^{m}},1\right)$ are the unique $I$-points of $\mathcal{C}$  which are
mapped by $\gamma _{c}$ onto points of $\ell : X_{1}=0$ (here, $R_{c}^{\gamma _{c}}=(1/c^{2}(h^{p^{m}}-1),0,1)$). Further, $P_{c} \neq R_{c}$ since $h^{p^{m}}\neq h$, hence $P_{c}^{\gamma_{c}} \neq R_{c}^{\gamma_{c}}$. 

Since $h\in Q_{+}$ and $B=\mathcal{O}_{h,i}\cup \mathcal{O}_{h^{p^{m}},j}$, it follows that both $P_{c}$ and $R_{c}$ lie in $B$ if and only if $i=j$ and $c\in Q_{i}$. Thus, $\left\vert B^{\gamma_{c}}\cap \mathcal{O}_{\infty }\right\vert=2$ if and only if $i=j$ and $c \in Q_{i}$. When this occurs, $B^{\gamma_{c}}\cap \mathcal{O}_{\infty }=B^{\gamma_{-c}}\cap \mathcal{O}_{\infty }$ since $P_{-c}^{\gamma _{-c}}=P_{c}^{\gamma_{c}}$ and 
$R_{-c}^{\gamma _{-c}}=R_{c}^{\gamma_{c}}$. Therefore, the number of unordered pairs of distinct points of $\mathcal{O}_{\infty }$ contained in a block of $\mathcal{D}$ of the form $B^{\gamma_{c}}$ with $c\in Q_{i}$ is $\frac{q-1}{4}$ since $\left \vert Q_{i} \right\vert =\frac{q-1}{2}$ and for each $c \in Q_{i}$ one has $B^{\gamma_{c}}\cap \mathcal{O}_{\infty }=B^{\gamma_{-c}}\cap \mathcal{O}_{\infty }$.  $\hfill\square$
\end{proof}

\bigskip

{\bf Proof of Theorem \ref{main}.}\, The aim of this proof is to demonstrate that there are no $2$-$\left( \frac{q(q-1)}{2},q-1,2\right) $ designs $\mathcal{D}$
admitting $PSL(2,q)\trianglelefteq G\leq P\Gamma L(2,q)$ as a flag-transitive automorphims group when $q>5$. We will prove this by showing that there are pairs of points such that if there is block containing them, then the number distinct blocks containing them is at least four.

The $I$-points of $\mathcal{C}$ contained in $\ell $ are exactly those of $%
\mathcal{O}_{\infty }$, which are $\frac{q-1}{2}$ points. Hence, the number of
unordered pairs of distinct points of $\mathcal{O}_{\infty }$ is $\frac{%
(q-1)(q-3)}{8}$. Let $\{A_{1},A_{2}\}$ be any such pair, then there is an element of $\mathcal{B}$ containing it since $\mathcal{D}$ is a $2$-design, and hence there is $\psi \in F_{\xi_{1},\xi_{2}}$ such that $A_{1},A_{2} \in B^{\psi}$ by Proposition \ref{BF}, where either $B=\mathcal{O}_{h,i}\cup \mathcal{O}_{h^{\prime},j}$ with $h^{\prime}=h$ and  $i,j \in \{-1,+1\}$, $i\neq j$, and hence $B=\mathcal{C}_{h}^{\ast}$, or $h^{\prime}=h^{p^{m}} \neq h$, $h^{p^{2m}}=h$ for some $1 \leq m \leq f/2$, and $i,j \in \{-1,+1\}$ by Lemma \ref{BOO}(2). 

Note that, $\psi \neq 1$ since $B \cap \ell=  \emptyset$ and $ \mathcal{O}_{\infty } \subset \ell$, where $\ell:X_{1}=0$. Now, $\psi \in W$ if and only if $B=\mathcal{O}_{h,i}\cup \mathcal{O}_{h^{p^{m}},i}$ by Lemma \ref{Syl}. Further, the set $Y$ consisting of the unordered pairs of distinct points of $%
\mathcal{O}_{\infty }$ not contained in a block of the form $B^{\gamma _{c}}$ with $c \in GF(q)^{\ast}$, is $\frac{(q-1)(q-3)}{8}-\frac{q-1}{4}$ or  $\frac{%
(q-1)(q-3)}{8}$ according as $B$ is equal or not to $\mathcal{O}_{h,i}\cup \mathcal{O}_{h^{p^{m}},i}$, respectively. Thus $\left\vert Y \right \vert >1$ since $q\equiv 1\pmod{4}$ and $q>5$, and hence each pair $\{A_{1},A_{2} \} \in X$ is contained in $B^{\psi}$ for some $\psi=\tau_{\xi} \gamma _{c_{0}}\alpha ^{2u_{0}}$ with $\xi \in \{ \xi_{1}, \xi_{2}\}$, $c_{0}\in GF(q)$ and $1\leq u_0\leq \frac{q-1}{2}$ by Proposition \ref{BF}.

Now,  $A_{1}=(\omega
^{2t_{1}+1},0,1)$ and $A_{2}=(\omega ^{2t_{2}+1},0,1)$ for some fixed $%
t_{1},t_{2} \in \left\{ 1,...,\frac{q-1}{2}\right\} $ with $t_{1}\neq t_{2}$ by Lemma \ref{orbit}(ii.b) since $A_{1}$ and $%
A_{2}$ are distinct $I$-points of $\mathcal{C}$ lying in $\mathcal{O}_{\infty }$.
Then there are $%
P_{1},P_{2}\in B$ such that $P_{1}^{\psi }=A_{1}$ and $P_{2}^{\psi }=A_{2}$.
Hence, there are $\mu _{1},\mu _{2}\in GF(q)^{\ast }$ such that either $%
P_{1}=\left( h\mu _{1}^{2},\mu _{1},1\right) $ and $P_{2}=\left( h^{\prime}\mu
_{2}^{2},\mu _{2},1\right) $, or $P_{1}=\left( h\mu _{1}^{2},\mu
_{1},1\right) $ and $P_{2}=\left( h\mu _{2}^{2},\mu _{2},1\right) $, or $%
P_{1}=\left( h^{\prime}\mu _{1}^{2},\mu _{1},1\right) $ and $P_{2}=\left(
h^{\prime}\mu _{2}^{2},\mu _{2},1\right) $ by Lemma \ref{BOO}.

Assume that $P_{1}=\left( h\mu _{1}^{2},\mu _{1},1\right) $ and $%
P_{2}=\left( h^{\prime}\mu _{2}^{2},\mu _{2},1\right) $. Then $P_{1}^{\psi
}=A_{1}$ and $P_{2}^{\psi}=A_{2}$ is equivalent to say that for the
fixed $t_{1},t_{2} \in \left\{ 1,...,\frac{q-1}{2}\right\} $ with $t_{1}\neq t_{2}$ there is
a solution $(c_{0},u_{0},\mu _{1},\mu _{2})$ of the following system of equations determined by using (\ref{Oh}) and (\ref{Ohpm}) (see also Remark \ref{Nozero}): 
\begin{equation}
\left\{ 
\begin{array}{c}
h\left( \xi +c\right) \mu _{1}^{2}+\left( \xi ^{2}+2c\xi -1\right) \mu
_{1}+\xi \left( c\xi -1\right)=0,
\\ 
\omega ^{4u} \frac{ h\mu _{1}^{2}+2\xi \mu _{1}+\xi ^{2}}{ h \left( c+\xi \right) ^{2}\mu
_{1}^{2}+2\left( c\xi -1\right) \left( c+\xi \right) \mu _{1}+\left( c\xi
-1\right) ^{2}}=\omega ^{2t_{1}+1}, \\ 
h^{\prime}\left( \xi +c\right) \mu _{2}^{2}+\left( \xi ^{2}+2c\xi
-1\right) \mu _{2}+ \xi \left( c\xi -1\right) =0,\\ 

\omega ^{4u}\frac{ h^{\prime}\mu _{2}^{2}+2\xi \mu _{2}+\xi ^{2} }{h^{\prime} \left( c+\xi \right)
^{2}\mu _{2}^{2}+2\left( c\xi -1\right) \left( c+\xi \right) \mu _{2}+\left(
c\xi -1\right) ^{2}}=\omega
^{2t_{2}+1}.%
\end{array}%
\right.   \label{System}
\end{equation}

Since $q\equiv 1\pmod{4}$, it follows that there are exactly four
distinct solutions of the equations $\omega ^{4u}=\omega ^{4u_{0}}$, say $%
\omega ^{u_{e}}$ with $e=0,1,2,3$. In particular, $u_{0}$, $%
u_{1},u_{2},u_{3} $ are pairwise distinct. Hence, for the fixed $t_{1},t_{2} \in \left\{ 1,...,\frac{q-1}{2}\right\} $ with $t_{1}\neq t_{2}$
the quadruples $(c_{0},u_{i},\mu _{1},\mu _{2})$ with $i=1,2,3$ are also
solutions of (\ref{System}). Therefore, $P_{1}^{\psi _{e}}=A_{1}$ and $%
P_{2}^{\psi _{e}}=A_{2}$ with $\psi _{e}=\tau_{\xi} \gamma _{c_{0}}\alpha ^{2u_{e}}
$ and $\psi_{0}=\psi$, and hence $A_{1},A_{2}\in B^{\psi _{e}}$ for $e=0,1,2,3$. Further, $\psi
_{e}$ with $e=0,1,2,3$ are four distinct elements of $F_{\xi_{1},\xi_{2}}$.

If $c_{0} \neq \frac{1}{2}(\xi^{-1}-\xi)$, then $B^{\psi _{e}}
$ with $e=0,1,2,3$ are four distinct blocks of $\mathcal{D}$ by Lemma \ref{LSR1}(3), and each of these contains $\{A_{1},A_{2}\}$. This violates $\lambda =2$. Therefore, each element in $Y$ in contained in exactly $2$ blocks of the form $B^{\tau_{\xi} \gamma _{c_{0}}\alpha ^{2u_{0}}}$ with $c_{0} = \frac{1}{2}(\xi^{-1}-\xi)$ since $\mathcal{D}$ is a $2$-design with $\lambda=2$.

The number of elements in $F_{\xi_{1},\xi_{2}}$ with $c_{0}= \frac{1}{2}(\xi^{-1}-\xi)$ and $\xi \in \{\xi_{1},\xi_{2}\}$ is $q-1$ by Lemma \ref{LSR1}(3)(4) since $\chi(\xi_{1}\xi_{2})=-1$, and these are such that $B^{\tau_{\xi} \gamma _{c_{0}}\alpha ^{2u_{0}}}=B^{\tau_{\xi} \gamma _{c_{0}}\alpha ^{2u_{0}+\frac{q-1}{2}}}$. Hence, the number of distinct blocks of $\mathcal{D}$ of the form $B^{\tau_{\xi} \gamma _{c_{0}}\alpha ^{2u_{0}}}$ with $c_{0}= \frac{1}{2}(\xi^{-1}-\xi)$ and $\xi \in \{\xi_{1},\xi_{2}\}$ is $\frac{q-1}{2}$ by Lemma \ref{LSR1}(3). Then $\left\vert Y\right\vert \leq \frac{q-1}{4}$ since each element in $Y$ is contained in exactly $2$ blocks of the form $B^{\tau_{\xi} \gamma _{c_{0}}\alpha ^{2u_{0}}}$ with $c_{0}= \frac{1}{2}(\xi^{-1}-\xi)$ and $\xi \in \{\xi_{1},\xi_{2}\}$. Then $q \leq 7$ since $\left\vert Y \right\vert $ is either $\frac{%
(q-1)(q-3)}{8}$ or $\frac{(q-1)(q-3)}{8}-\frac{q-1}{4}$, but this contradicts $q>5$ and $q \equiv 1 \pmod{4}$. The cases $P_{1}=\left( h\mu _{1}^{2},\mu _{1},1\right) $ and $P_{2}=\left( h\mu _{2}^{2},\mu _{2},1\right) $, or $%
P_{1}=\left( h^{\prime}\mu _{1}^{2},\mu _{1},1\right) $ and $P_{2}=\left(
h^{\prime}\mu _{2}^{2},\mu _{2},1\right) $ are ruled out similarly. This completes the proof of Theorem \ref{main}.     $\hfill\square$

\bigskip

\noindent
{\bf Proof of Theorem \ref{TH1}.}
 It follows immediately from Proposition \ref{D2(q+1)even} and Theorems \ref{soon} and \ref{main}. $\hfill\square$

\bigskip

\noindent{\bf Declaration of Competing Interest}

The authors declare that they have no known competing financial interests or personal relationships that could have appeared to influence the work reported in this paper.

\noindent{\bf Data availability}

No data was used for the research  described in the article.

\medskip
\noindent

\end{document}